\documentclass[11pt]{article}
\usepackage{amssymb}
\usepackage{amsfonts}
\usepackage{amsmath}
\usepackage{cases}
\usepackage[usenames]{color}
\usepackage{mathrsfs}
\usepackage[numbers,sort&compress]{natbib}
\usepackage{cases}
\usepackage{amsthm}

\usepackage{indentfirst}
\usepackage{setspace}
\usepackage{abstract}
\usepackage[none]{hyphenat}

\allowdisplaybreaks[4]
\def\ptl{\partial}

\def\cl{\centerline}

\def\al{\alpha}

\def\b{\beta}

\def\vs{\vspace*}

\def\Z{\mathbb{Z}}
\def\N{\mathbb{N}}

\def\C{\mathbb{C}}
\def\g{\mathbf{g}}

\def\C{\mathbb{C}}

\def\SUM#1#2{{\mbox{$\sum\limits_{#1}^{#2}$}}}

\newcommand{\re}[1]{\cite{#1}}  
\numberwithin{equation}{section}
\newtheorem{theo}{Theorem}[section]
\newtheorem{defi}[theo]{Definition}
\newtheorem{coro}[theo]{Corollary}
\newtheorem{lemm}[theo]{Lemma}

\newtheorem{prop}[theo]{Proposition}
\newtheorem{clai}{Claim}

\newtheorem{remark}[theo]{Remark}

\begin{document}
\sloppy{}
\baselineskip 6pt
\lineskip 6pt

\begin{center}{\Large\bf Irreducible tensor product modules over the \\ Takiff Lie algebra for $\mathfrak{sl}_{2}$}
\footnote {Corresponding author: Xiaoyu Zhu (zhuxiaoyu1@nbu.edu.cn)}
\end{center}
\vs{6pt}

\cl{Yu Qiao,\ \ Xiaoyu Zhu}
\cl{\footnotesize  School of Mathematics and Statistics, Ningbo University, Ningbo, Zhejiang, 315211, China}
\cl{\footnotesize E-mails: qiaoyu20240912@163.com, zhuxiaoyu1@nbu.edu.cn}
\quad\\
\vs{12pt} \par
\noindent{\bf Abstract.}
In this paper, we construct a class of non-weight modules over the Takiff $\mathfrak{sl}_{2}$ 
by taking the tensor products of the irreducible free $U(\overline{\mathfrak{h}})$-modules of rank 1,
where $\overline{\mathfrak{h}}$ is a natural Cartan subalgebra of the Takiff $\mathfrak{sl}_{2}$, 
with the irreducible highest weight modules. We characterize the irreducibility of these tensor product modules 
and determine the necessary and sufficient conditions for isomorphisms between them. 
We further prove that these non-weight modules are distinct from the known non-weight modules. 
Finally, we reformulate some tensor product modules over the Takiff $\mathfrak{sl}_{2}$ as induced modules derived from modules over certain subalgebras, 
and determine the necessary and sufficient conditions for the reducibility of these induced modules.

\noindent{\bf Keywords:} Takiff $\mathfrak{sl}_{2}$, non-weight modules, tensor product, highest weight modules.

\section{Introduction}

The representation theory of Lie algebras has attracted significant attention from mathematicians, 
encompassing both weight representations and non-weight representations.
In contrast to semi-simple Lie algebras, the representation theory, for non-semi-simple Lie algebras remains far less studied.

There are several natural families of non-semi-simple Lie algebras, 
which include current Lie algebras, conformal Galilei algebras,
Takiff Lie algebras, and others. Among these, current Lie algebras are the one,
which are widely investigated due to their connections with Kac-Moody algebras and quantum groups.
For this type Lie algebras, a complete classification of Harish-Chandra modules has been established, see \cite{L2018}.
The highest weight theory over truncated current Lie algebra was investigated in \cite{W2011}.

Takiff Lie algebras can be seen as the ``smallest''
non-semi-simple truncated current Lie algebras. 
The Takiff Lie algebra for $\mathfrak{sl}_{2}$ also falls into 
the category of conformal Galilei algebras, see e.g.\cite{LMZ2014}, 
and is defined as the semidirect product of $\mathfrak{sl}^{}_{2}$ with its adjoint representation. 
These particular Lie algebras were first introduced by Takiff in \cite{T},
whose primary motivation was the invariant theory for such Lie algebras. 
Representation theory of Takiff Lie algebras and, especially, questions
related to highest weight modules and BGG category $\mathcal{O}$ have 
recently attracted considerable attention, see \cite{C2023,MM2022,MS,Z2025} and the references therein.
More recently, the author classified a family of non-weight modules,
known as $U(\mathfrak{h})$-free modules, 
over Takiff $\mathfrak{sl}_{2}$ in \cite{Z2024}.
Such modules were initially introduced by
J. Nilsson in \cite{N2015} and, independently, 
by H. Tan and K. Zhao in \cite{TZ2018} for the simple Lie algebra $\mathfrak{sl}^{}_{n+1}$.
The modules in this family are free of finite rank when restricted to a
fixed Cartan subalgebra of the Lie algebra in question. 
Since then, these ideas were exploited and generalized to 
consider modules over various Lie algebras, see \cite{CC2015,CG2017,SYZ2024,TZ2015}.
Furthermore, some subsequent work on constructing new modules by taking tensor products
of $U(\mathfrak{h})$-free modules with known simple modules on many Lie algebras,
such as the Virasoro algebra \cite{TZ2013, TZ2016, LGW2020}, the affine-Virasoro algebra of type $A_{1}$ \cite{CY2022}, 
the mirror Heisenberg-Virasoro algebra \cite{GMZ2022} and others. 
The simplicity and isomorphism classes
of the tensor product modules over these algebras have also been determined.
It is well-known that this is an important and efficient way to construct new modules over an algebra.
Motivated by these,
the aim of this paper is to construct new non-weight modules over Takiff Lie algebra for $\mathfrak{sl}_{2}$ $\mathbf{g}$
by taking the tensor product of the irreducible modules 
defined in \cite{Z2024} with irreducible highest weight modules.

The paper is organized as follows. 
In Section 2, we introduce the basic setup that we work in and recall
some necessary definitions and preliminary results. 
In section 3, we prove the irreducibility of the tensor product module $V\left(\lambda,a,b_{\beta}\right)\otimes L\left(\eta,\theta\right)$,
where $V\left(\lambda,a,b_{\beta}\right)=\Gamma\left(\lambda,a,b\right)$, $\Theta(\lambda,a,b)$, or
$\Omega\left(\lambda,a,\beta\left(\overline{h}\right)\right)$ ($a\neq0$) (see Definition \ref{defi01}). 
Section 4 is devoted to classifying the isomorphism classes of the tensor product modules $V\left(\lambda,a,b_{\beta}\right)\otimes L\left(\eta,\theta\right)$. 
In section 5, we demonstrate that the tensor product modules
$V\left(\lambda,a,b_{\beta}\right)\otimes L\left(\eta,\theta\right)$ constructed in this paper are new 
by making a comparison with the other known non-weight modules. 
In the final section, we realize these tensor product modules as induced modules from certain subalgebras,
and give necessary and sufficient conditions for their reducibility.

\section{Preliminaries}\label{sec:preliminaries}
Throughout this paper, we denote by $\C$, $\C^{\times}$, $\Z$, $\Z_{+}$  and $\N$ the
sets of complex numbers, nonzero complex numbers, integers, non-negative integers and positive integers, respectively.
In this paper, all vector spaces (resp. algebras, modules) are defined over $\C$. We denote by $V^{*}$ the dual space of a vector space $V$. Denote by $\C[s,t]$ the polynomial algebra in variables $s$ and $t$ with coefficients in $\C$.

Consider the Lie algebra $\mathfrak{sl}_{2}$ with the standard basis $\{e, h, f\}$ and the Lie bracket
\begin{equation*}
  [e,f]=h,\ \ \ [h,e]=2e,\ \ \ [h,f]=-2f.
\end{equation*}
Let $D:=\C[t]/(t^{2})$ be the algebra of dual numbers. Consider the associated {\it Takiff Lie algebra} $\mathbf{g}:=\mathfrak{sl}_{2}\otimes_{\C}D$ with the Lie bracket
\begin{equation*}
  [a\otimes t^{i}, b\otimes t^{j}]=[a, b]\otimes t^{i+j},
\end{equation*}
where $a, b\in \mathfrak{sl}_{2}$ and $i, j\in \left\{0,1\right\}$, and the Lie bracket on the right hand side being the usual $\mathfrak{sl}_{2}$-Lie bracket. Set
\begin{equation*}
  \overline{e}:=e\otimes t, \ \ \overline{f}:=f\otimes t, \ \ \overline{h}:=h\otimes t.
\end{equation*}
Let $\overline{\mathfrak{n}}_{+}$ be the subalgebra of $\g$ generated by $e, \overline{e}$. Let $\overline{\mathfrak{h}}$ be the subalgebra of $\g$ generated by $h, \overline{h}$, and,  finally,  let $\overline{\mathfrak{n}}_{-}$ be the subalgebra of $\g$ generated by $f, \overline{f}$. Then we have the following triangular decomposition of $\g:$
\begin{equation*}
  \g=\overline{\mathfrak{n}}_{+}\oplus \overline{\mathfrak{h}}\oplus \overline{\mathfrak{n}}_{-}.
\end{equation*}
\noindent Let $\gamma\left(h,\overline{h}\right)\in \C\left[h,\overline{h}\,\right]$ and write $\gamma\left(h,\overline{h}\right)=\sum _{i=0}^{m}\sum_{j=0}^{n}c_{i,j}h^{i}\overline{h}^{j}$ with $m,n\in \Z_{+}$ and $c_{i,j}\in \C$. 
 We define the degree of $h$ and $\overline{h}$ of $\gamma\left(h,\overline{h}\right)$ by $m$ and $n$, respectively. Denote by $\mbox{deg}_{y}\gamma\left(h,\overline{h}\right)$ the degree of $y$ of $\gamma\left(h,\overline{h}\right)$ for $y\in \left\{h,\overline{h}\right\}$.

Now let us recall the definitions of the $\g$-modules $\Gamma\left(\lambda,a,b\right)$, $\Theta(\lambda,a,b)$, 
$\Omega\left(\lambda,a,\beta\left(\overline{h}\right)\right)$ and $L(\eta,\theta)$ concerned in this paper, and some basic properties of them.
As vector spaces, $\Gamma\left(\lambda,a,b\right)$, $\Theta(\lambda,a,b)$ and  
$\Omega\left(\lambda,a,\beta\left(\overline{h}\right)\right)$ coincide with $\C\left[\,h,\overline{h}\,\right]$.
\begin{defi}\label{defi01}\rm{(see \cite{Z2024})} Let $\lambda\in \C^{\times}$, $a,b\in \C$ and $\gamma\left(h,\overline{h}\right)\in \C\left[\,h,\overline{h}\,\right]$. 
We can define the $\mathbf{g}$-module structure on $\C\left[\,h,\overline{h}\,\right]$ as follows:
\begin{align*}
  \Gamma\left(\lambda,a,b\right):\ \ \ e\cdot \gamma\left(h,\overline{h}\right)&=-2\lambda \overline{\partial}\left(\gamma\left(h-2,\overline{h}\right)\right),\ \ \ \ \ \ \ \ \ \ \ \ \overline{e}\cdot \gamma\left(h,\overline{h}\right)=\lambda\gamma\left(h-2,\overline{h}\right), \\
  \ \ \ \overline{f}\cdot \gamma\left(h,\overline{h}\right)&=-\frac{1}{4\lambda}\left(\overline{h}^{2}+a\right)\gamma\left(h+2,\overline{h}\right), \ \ \ \ x\cdot \gamma\left(h,\overline{h}\right)=x\gamma\left(h,\overline{h}\right),\\
  \ \ \ f\cdot \gamma\left(h,\overline{h}\right) &= -\frac{1}{2\lambda}\left((h+2)\overline{h}+b\right)\gamma\left(h+2,\overline{h}\right)-\frac{1}{2\lambda}
  \left(\overline{h}^{2}+a\right)\overline{\partial}\left(\gamma\left(h+2,\overline{h}\right)\right),
\end{align*}
\begin{align*}
 \Theta(\lambda,a,b):\ \ \ f\cdot \gamma\left(h,\overline{h}\right)&=2\lambda \overline{\partial}\left(\gamma\left(h+2,\overline{h}\right)\right),\ \ \ \ \ \ \ \ \ \ \ \ \ \ \overline{f}\cdot \gamma\left(h,\overline{h}\right)=\lambda\gamma\left(h+2,\overline{h}\right), \\
  \ \ \ \overline{e}\cdot \gamma\left(h,\overline{h}\right)&=-\frac{1}{4\lambda}\left(\overline{h}^{2}+a\right)\gamma\left(h-2,\overline{h}\right), \ \ \ \ \,x\cdot \gamma\left(h,\overline{h}\right)=x\gamma\left(h,\overline{h}\right),\\
  \ \ \ e\cdot \gamma\left(h,\overline{h}\right) &= -\frac{1}{2\lambda}\left((h-2)\overline{h}+b\right)\gamma\left(h-2,\overline{h}\right)+\frac{1}{2\lambda}
  \left(\overline{h}^{2}+a\right)\overline{\partial}\left(\gamma\left(h-2,\overline{h}\right)\right),
\end{align*}
\begin{align*}
  \Omega\left(\lambda,a,\beta\left(\overline{h}\right)\right):\ \ \ e\cdot \gamma\left(h,\overline{h}\right)&=\left(\frac{\lambda}{2}h+\al\left(\overline{h}\right)\right)\gamma\left(h-2,\overline{h}\right)
  -\lambda\left(\overline{h}+a\right)
  \overline{\partial}\left(\gamma\left(h-2,\overline{h}\right)\right),\\
  \ \ \ f\cdot \gamma\left(h,\overline{h}\right) &= -\left(\frac{1}{2\lambda}h-\beta\left(\overline{h}\right)\right)\gamma\left(h+2,\overline{h}\right)
  -\frac{1}{\lambda}\left(\overline{h}-a\right)\overline{\partial}\left(\gamma\left(h+2,\overline{h}\right)\right),\\
  \ \ \ \overline{e}\cdot \gamma\left(h,\overline{h}\right)&=\frac{\lambda}{2}\left(\overline{h}+a\right)\gamma\left(h-2,\overline{h}\right), \ \ \ x\cdot \gamma\left(h,\overline{h}\right)=x\gamma\left(h,\overline{h}\right),\\
  \ \ \ \overline{f}\cdot \gamma\left(h,\overline{h}\right)&=-\frac{1}{2\lambda}\left(\overline{h}-a\right)\gamma\left(h+2,\overline{h}\right),
\end{align*}
where $x\in \overline{\mathfrak{h}}$ and $\al\left(\overline{h}\right),\beta\left(\overline{h}\right)\in \C\left[\,\overline{h}\,\right]$  with $\al\left(\overline{h}\right),\beta\left(\overline{h}\right)$ satisfying
\begin{align}\label{1}
\left(
\begin{array}{c}
p_{0} \\
p_{1} \\
p_{2} \\
\vdots \\
p_{m} \\
\end{array}
\right)=\lambda^{2}A\left(
                        \begin{array}{c}
                          q_{0} \\
                          q_{1} \\
                          q_{2} \\
                          \vdots \\
                          q_{m} \\
                        \end{array}
                      \right)
                      \ \ \ {\rm with}\ \ A=\left(
                       \begin{array}{ccccc}
                         1 & 2b & 2b^{2} & \cdots & 2b^{m} \\
                         0 & 1 & 2b & \cdots & 2b^{m-1} \\
                         0 & 0 & 1 &\cdots  & 2b^{m-2} \\
                         \vdots & \vdots & \vdots& \ddots & \vdots \\
                         0 & 0 & 0 & \cdots & 1 \\
                       \end{array}
                     \right),\end{align}
where $m={\rm deg}_{\overline{h}}\left(\al\left(\overline{h}\right)\right)={\rm deg}_{\overline{h}}\left(\beta\left(\overline{h}\right)\right)$ and $p_{i},q_{i}$, for $i\in \{0,\ldots,m\}$, are coefficients of $\al\left(\overline{h}\right)$ and $\beta\left(\overline{h}\right)$, respectively.\end{defi}
For later use, we need the following known result,
which gives the conditions for irreducibility and a characterization of isomorphic relations between the $\g$-modules constructed in Definition \ref{defi01}.
\begin{prop}\label{prop-2.2a}\rm{(see \cite{Z2024})} Let $\lambda\in\C^{\times}$, $a,b\in\C$ and
$\beta\left(\overline{h}\right)\in \C\left[\,\overline{h}\,\right]$. Then the following holds:
\begin{itemize}
  \item [\rm(i)] The $\g$-modules $\Gamma\left(\lambda,a,b\right)$ and $\Theta(\lambda,a,b)$
  are simple, while the $\g$-module
  $\Omega\left(\lambda,a,\beta\left(\overline{h}\right)\right)$ is simple
  if and only if $a\neq0$.
  \item [\rm(ii)] $\Gamma\left(\lambda,a,b\right)$, $\Theta(\lambda,a,b)$
  and $\Omega\left(\lambda,a,\beta\left(\overline{h}\right)\right)$ are
  pairwise non-isomorphic for all parameter choices. Moreover,
  \begin{eqnarray*}
   \Gamma(\lambda,a,b)&\cong& \Gamma(\lambda',a',b')\Longleftrightarrow \lambda=\lambda', a=a', b=b', \\
    \Theta(\lambda,a,b)&\cong& \Theta(\lambda',a',b')\Longleftrightarrow  \lambda=\lambda', a=a', b=b',\\
    \Omega\left(\lambda,a,\beta\left(\overline{h}\right)\right)&\cong& \Omega\left(\lambda',a',\beta'\left(\overline{h}\right)\right)\Longleftrightarrow  \lambda=\lambda', a=a',
  \beta\left(\overline{h}\right)=\beta'\left(\overline{h}\right).
  \end{eqnarray*}
\end{itemize}
\end{prop}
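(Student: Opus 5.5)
This proposition is quoted from \cite{Z2024}, so the shortest route is simply to cite it. If I were to reprove it, the plan would be to work throughout in $\C[h,\overline{h}]$, where $\overline{\mathfrak{h}}$ acts by multiplication. I would exploit the fact that $e,\overline{e}$ shift $h\mapsto h-2$ and $f,\overline{f}$ shift $h\mapsto h+2$, up to explicit polynomial and $\overline{\partial}$ factors.

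For simplicity in (i), take a nonzero submodule $W$ and pick $0\neq\gamma\in W$ of minimal $\deg_h$. Consider an operator that lowers $\deg_h$ without destroying $\overline{h}$-information. For $\Gamma$ this is $\overline{e}-\lambda\cdot$ composed with multiplication: $(\overline{e}\cdot\gamma)=\lambda\gamma(h-2,\overline{h})$, so $\lambda^{-1}\overline{e}\gamma-\gamma=\gamma(h-2,\overline{h})-\gamma(h,\overline{h})$ lies in $W$. If $\deg_h\gamma\ge1$, this element is nonzero and of smaller $h$-degree, so minimality forces $\gamma\in\C[\overline{h}]$. Next use $e$, which gives $-2\lambda\overline{\partial}\gamma$, repeatedly to lower $\deg_{\overline{h}}$ until we reach a nonzero constant. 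Hence $1\in W$, and since $\overline{\mathfrak{h}}$ acts freely, $W=\C[h,\overline{h}]$. $\Theta$ is symmetric, with the roles of $e$ and $f$ swapped. For $\Omega$, the difference trick with $\overline{e}$ yields $(\overline{h}+a)\gamma(h-2,\overline{h})-\lambda'\gamma$, which only reduces us to the case where $W$ contains some element of $\C[\overline{h}]$. One then uses the combination $\overline{e}$ and $\overline{f}$ together with $e,f$ to extract $\overline{\partial}$-type operators. The key point is that $\overline{f}\,\overline{e}$ acts as multiplication by $-\frac14(\overline{h}+a)(\overline{h}-a)$, while the $\overline{\partial}$ terms in $e$ let us strip factors of $\overline{h}\pm a$. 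When $a\neq0$ these factors are coprime, so a Bezout argument produces $1\in W$. When $a=0$, the subspace $\overline{h}\,\C[h,\overline{h}]$ is a proper submodule: every operator either multiplies by $\overline{h}$ or acts via $(\overline{h}\mp0)\overline{\partial}$, which preserves $\overline{h}\C[h,\overline{h}]$. This shows that the condition $a\neq0$ is necessary.

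For (ii), suppose $\phi$ is an isomorphism. Since both modules are free of rank one over $U(\overline{\mathfrak{h}})$ and $\phi$ is $\overline{\mathfrak{h}}$-linear, $\phi$ is multiplication by $\phi(1)$, and invertibility forces $\phi(1)\in\C^\times$. So the actions must literally coincide on $1$ and on all $\gamma$. Comparing $\overline{e}\cdot1$ and $\overline{f}\cdot1$ distinguishes the three families: one factor is a constant in one family, quadratic in $\overline{h}$ in another, and linear in the third. Within each family, comparing coefficients recovers $\lambda$, $a$, $b$, or $\beta$; $\alpha$ is then determined by \eqref{1}.

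The main obstacle is simplicity of $\Omega$ for $a\neq0$, where no operator acts as a pure shift. I would first try to show that a minimal-$h$-degree element can be taken in $\C[\overline{h}]$, and then run the coprimality argument on $\overline{h}\pm a$.
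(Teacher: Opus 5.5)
The paper gives no proof of this proposition. It is imported from \cite{Z2024}, which is also your first suggestion, so at that level you agree with the paper. Your self-contained sketch is an independent verification the paper does not attempt, and its outline is sound. You use the $\overline{e}$-difference trick to reach $W\cap\C[\overline{h}]\neq0$, then lower $\deg_{\overline{h}}$ with the $\overline{\partial}$-terms. For (ii) you observe that a $U(\overline{\mathfrak{h}})$-linear bijection between rank-one free modules is a nonzero scalar. After that, $\overline{e}\cdot1$ (constant, quadratic or linear in $\overline{h}$) separates the three families, and the remaining actions on $1$ recover the parameters.

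Three points in the $\Omega$ part need repair.

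First, the element $(\overline{h}+a)\gamma(h-2,\overline{h})-\lambda'\gamma$ does not lower $\deg_h$ if you subtract a scalar multiple of $\gamma$: its top $h$-coefficient is $(\overline{h}+a-\lambda')g_r(\overline{h})\neq0$. Subtract $(\overline{h}+a)\gamma$ instead. This lies in $W$ because $h,\overline{h}$ act by multiplication, so $W$ is an ideal of $\C[h,\overline{h}]$. Then $\tfrac{2}{\lambda}\overline{e}\cdot\gamma-(\overline{h}+a)\gamma=(\overline{h}+a)\bigl(\gamma(h-2,\overline{h})-\gamma(h,\overline{h})\bigr)$, which is nonzero of smaller $h$-degree when $\deg_h\gamma\geq1$.

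Second, the coprimality step needs both $e$ and $f$; the product $\overline{f}\,\overline{e}$ plays no role. For $0\neq g\in W\cap\C[\overline{h}]$, the ideal property gives $e\cdot g-(\tfrac{\lambda}{2}h+\alpha)g=-\lambda(\overline{h}+a)g'\in W$ and $f\cdot g+(\tfrac{1}{2\lambda}h-\beta)g=-\tfrac{1}{\lambda}(\overline{h}-a)g'\in W$. Hence $2a\,g'\in W$, and iterating gives $1\in W$ when $a\neq0$. So the step you flagged as the main obstacle is actually the shortest. The operator $e$ alone cannot suffice in general. One checks that $(\overline{h}+a)\C[h,\overline{h}]$ is stable under every generator except $f$, and $(\overline{h}-a)\C[h,\overline{h}]$ under every generator except $e$. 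This is also the cleanest reason $\overline{h}\,\C[h,\overline{h}]$ is a submodule when $a=0$. Your justification there omits the shift-and-multiply parts of $e$ and $f$, although they preserve it too.

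Third, in the direction ``parameters equal $\Rightarrow$ modules isomorphic'' for $\Omega$, you use that $\alpha$ is determined by \eqref{1}. As printed, $A$ has entries $2b^{k}$, but $b$ is not a parameter of $\Omega$. Checking $[e,f]=h$ on $\Omega$ gives $\alpha+(\overline{h}-a)\alpha'=\lambda^{2}\bigl(\beta+(\overline{h}+a)\beta'\bigr)$, so the entries should be $2a^{k}$. With that reading your argument is complete. For the converse direction you do not need \eqref{1} at all.
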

For any $\eta,\theta\in \C$, let $I(\eta,\theta)$ be the left ideal of $U(\g)$ generated by the following elements:
$$\{e,\overline{e},h-\theta,\overline{h}-\eta\}.$$
The Verma $\g$-module with highest weight $(\eta,\theta)$ is defined as the quotient module
$$\overline{L}(\eta,\theta)=U(\g)/ I(\eta,\theta).$$
By the PBW theorem, $\overline{L}(\eta,\theta)$ has a basis consisting of all vectors of the form
$$f^{i}\overline{f}^{j}\cdot \overline{v},$$
where $\overline{v}$ is the coset of 1 in $\overline{L}(\eta,\theta)$, and $i,j\in \Z_{+}$. 
Then we have the irreducible highest weight module $L(\eta,\theta)=\overline{L}(\eta,\theta)/J$, where $J$ is the unique maximal proper submodule of $\overline{L}(\eta,\theta)$.

In the following, we will always assume that $\lambda\in \C^{\times}$, $a,b,\eta,\theta\in \C$, $\beta\left(\overline{h}\right)\in \C\left[\,\overline{h}\,\right]$, 
$V(\lambda,a,b_{\b}):= \Gamma(\lambda,a,b),\Theta(\lambda,a,b)$ or $\Omega\left(\lambda,a,\beta\left(\overline{h}\right)\right)$ constructed in Definition \ref{defi01}, 
where $b_{\b}=b$, when $V=\Gamma,\Theta$, and $b_{\b}=\beta\left(\overline{h}\right)$ when $V=\Omega$. 
Let $L(\eta,\theta)$ be an irreducible highest weight $\g$-module.
\section{Irreducibility of the tensor product modules}
In this section, the irreducibility of the tensor product $\g$-module $V(\lambda,a,b_{\b})\otimes L(\eta,\theta)$ is given, where $a\in \C^{\times}$ and $b_{\b}=\beta\left(\overline{h}\right)$ when $V=\Omega$.
\begin{theo}The tensor product module $V(\lambda,a,b_{\b})\otimes L(\eta,\theta)$ is irreducible provided that $a\neq 0$ when $V=\Omega$.
\end{theo}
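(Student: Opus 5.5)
We follow the standard strategy for tensor products of $U(\overline{\mathfrak{h}})$-free modules with highest weight modules. The idea is to show that any nonzero submodule $W$ of $V(\lambda,a,b_{\b})\otimes L(\eta,\theta)$ contains a nonzero element of the form $\gamma\otimes v$ with $v$ the highest weight vector, and then to bootstrap to the whole space.

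First, fix a basis of $L(\eta,\theta)$ consisting of weight vectors. Write any $w\in W$ as $w=\sum_{k}\gamma_k(h,\overline{h})\otimes v_k$, with $v_k$ linearly independent weight vectors of $h$-weights $\theta-2n_k$. For $u\in L(\eta,\theta)$ of $h$-weight $\mu$ we have
\[
h\cdot(\gamma\otimes u)=h\gamma\otimes u+\mu\gamma\otimes u .
\]
Hence $(h-\theta+2n)$-type operators act as multiplication on the first factor shifted by a scalar depending on the weight. Also $\overline{h}$ acts on $L(\eta,\theta)$ by a locally finite operator: on each weight space of $L(\eta,\theta)$ it preserves a finite-dimensional filtration, since $\overline{\mathfrak{n}}_-$-degree is preserved.

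The key step uses the fact that $e$ and $\overline{e}$ act locally nilpotently on $L(\eta,\theta)$, while on $V$ they act injectively with explicit shift formulas. Take $w\in W$ nonzero. Choose $N$ minimal such that all $v_k$ lie in $\bigoplus_{n\le N}L_{\theta-2n}$, and consider the components of maximal depth. Applying suitable combinations
\[
\overline{e}-\lambda\,\sigma \quad(\text{for }\Gamma), \qquad \overline{e}\ \ \text{or}\ \ e \quad(\text{for }\Theta,\Omega),
\]
where $\sigma$ denotes the shift $\gamma(h,\overline{h})\mapsto\gamma(h-2,\overline{h})$, we reduce the second tensor factor. The shift itself is not in $U(\g)$, so we instead use the identity
\[
\overline{e}\cdot(\gamma\otimes u)=(\overline{e}\gamma)\otimes u+\gamma\otimes\overline{e}u
\]
together with multiplication by polynomials in $h-\mu$. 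Concretely, for $\Gamma$ the operator $\overline{e}$ on $V$ is $\lambda$ times the shift. Consider $\overline{e}\cdot(\gamma\otimes u)-\lambda(\sigma\gamma)\otimes u=\gamma\otimes\overline{e}u$. We realize this by comparing $\overline{e}\,x\cdot w$ with $x\,\overline{e}\cdot w$ for $x\in\overline{\mathfrak{h}}$. Induction on $N$, plus degree arguments in $h$ and $\overline{h}$ (using that $e,\overline{e}$ kill only $\overline{v}$ up to scalar in the irreducible $L$), will produce $\gamma\otimes \overline{v}\in W$ with $\gamma\neq0$. Using $h-\theta$ and $\overline{h}-\eta$, we then get $\C[h,\overline{h}]\gamma\otimes\overline v\subseteq W$.

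Next we show $V\otimes\overline v\subseteq W$. The span of $\{\gamma' : \gamma'\otimes\overline v\in W\}$ is an $\overline{\mathfrak{h}}$-stable subspace $I$, and in fact an ideal of $\C[h,\overline h]$. It is stable under $e,\overline e$, since $e\overline v=\overline e\overline v=0$. It is also stable under $f,\overline f$ modulo terms $\gamma'\otimes f\overline v$. To handle the latter, we again use the $h$-weight separation trick (multiply by $h-c$ and subtract) to isolate the $\overline v$ component, showing $I$ is stable under the $V$-action twisted by the highest weight. This twisted structure is the $\g$-module $V$ with its own shift, and it remains simple by Proposition \ref{prop-2.2a}(i), using $a\neq0$ for $\Omega$. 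Hence $I=\C[h,\overline h]$. Finally, induction on depth gives $V\otimes f^i\overline f^j\overline v\subseteq W$: apply $f,\overline f$ to $\gamma\otimes u$ and subtract the already-contained terms $(f\gamma)\otimes u$.

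We expect the main obstacle to be the first reduction, isolating a component with $\overline v$ as second factor. The difficulty is that $\overline h$ does not act semisimply on $L(\eta,\theta)$ in general, and the first factors get shifted in $h$. We would handle it by a double degree argument: pick $w$ with $N$ minimal and then $\deg_h$ of the leading coefficients minimal, and show that a commutator such as $[\overline e,h-c]$-combinations strictly lowers these invariants unless $w$ already has the required form.
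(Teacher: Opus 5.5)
\textbf{The gap: your first reduction is false, and so is the statement.} You claim that every nonzero submodule $W$ contains some $\gamma\otimes v$ with $\gamma\neq0$, where $v$ is the highest weight vector (your $\overline v$). This is not merely unproved; it is false, and the theorem fails in the generality stated.

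Take $L=L(0,1)$. This is the two-dimensional simple $\mathfrak{sl}_2$-module $\mathrm{span}\{v,fv\}$, inflated through $\g\to\g/(\mathfrak{sl}_2\otimes t)$, so $\overline e,\overline f,\overline h$ act on it by zero. The element $C'=e\overline f+\overline e f+\frac12 h\overline h-\overline h$ is central in $U(\g)$; it is the Casimir of the invariant form pairing $\mathfrak{sl}_2$ with $\mathfrak{sl}_2\otimes t$. Hence on $V\otimes L(0,1)$ the operator
\[
\Phi:=C'-(C'|_V)\otimes 1=\overline f\otimes e+\overline e\otimes f+\tfrac12\,\overline h\otimes h
\]
is a $\g$-endomorphism. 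The operators $\overline e,\overline f,\overline h$ commute on $V$, and on $L(0,1)$ one has $e^2=f^2=eh+he=fh+hf=0$ and $ef+fe=h^2=\mathrm{id}$. Therefore $\Phi^2=(\overline e\,\overline f+\frac14\overline h^2)\otimes 1$, which is the scalar $c=-a/4$ for $\Gamma,\Theta$ and $c=a^2/4$ for $\Omega$.

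On the other hand, $\Phi(1\otimes v)=\frac12\overline h\otimes v+(\overline e\cdot 1)\otimes fv$ is not a multiple of $1\otimes v$. So for $\alpha^2=c$, the space $\ker(\Phi-\alpha)\supseteq\mathrm{Im}(\Phi+\alpha)\neq0$ is a nonzero proper submodule. It contains no nonzero $\gamma\otimes v$: the $fv$-component of $(\Phi-\alpha)(\gamma\otimes v)$ is $(\overline e\cdot\gamma)\otimes fv$, and $\overline e$ acts injectively on all three families $V$. Thus no ``double degree argument'' can isolate a $v$-component. By Cayley--Hamilton the same happens for every finite-dimensional $L(0,n)$ with $n\ge1$, so a correct statement must at least exclude these.

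\textbf{A second, independent problem.} The device in your second step (``multiply by $h-c$ and subtract'') yields nothing new. Since $fh=(h+2)f$, one has $q(h)\cdot f(\gamma'\otimes v)=f\bigl(q(h+\theta-2)\gamma'\otimes v\bigr)$, so the parts $f\gamma'\otimes v$ and $\gamma'\otimes fv$ are never separated. This separation is genuinely needed for $\Theta$ and $\Omega$: the proper ideals $(\overline h^{2}+a)$ and $(\overline h+a)$ are already stable under $e,\overline e,h,\overline h$.

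\textbf{Comparison with the paper.} The paper's proof uses the same strategy as yours: produce a pure tensor in $M$, then use simplicity of $V$ and of $L(\eta,\theta)$ via $\{u: V\otimes u\subseteq M\}$. Its Claim 1 does not supply the missing step, for three reasons.
\begin{itemize}
\item The identity $\lambda^{-m}\overline e^{\,m}\cdot x=\sum_i g_i(\overline h)(h-2m)^i\otimes w_i$ drops the Leibniz terms $\binom{m}{k}\,\overline e^{\,m-k}\bigl(g_i(\overline h)h^i\bigr)\otimes\overline e^{\,k}w_i$ with $0<k<K$. The same happens with $e^{j}$ in Claim 2.
\item The ansatz $x=\sum_i g_i(\overline h)h^i\otimes w_i$, with one pure tensor per power of $h$, is not a general element.
\item The inclusion $\C[h,\overline h]g_0(\overline h)\otimes w_0\subseteq M$ would require $w_0$ to be an $\overline{\mathfrak h}$-eigenvector.
\end{itemize}
In the example above, $\overline e$ kills $L(0,1)$, so the Vandermonde step is actually valid there. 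For $V=\Gamma$ it produces elements such as $(\Phi+\alpha)(1\otimes v)=(\frac12\overline h+\alpha)\otimes v+\lambda\otimes fv\in\ker(\Phi-\alpha)$. These have $h$-degree $0$ but are not pure tensors.
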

\begin{proof}Let $M$ be a nonzero submodule of $V(\lambda,a,b_{\b})\otimes L(\eta,\theta)$. We want to prove that $M=V(\lambda,a,b_{\b})\otimes L(\eta,\theta)$. Note that for any $w\in L(\eta,\theta)$, there is a positive integer $K(w)$ such that $\overline{e}^{m}\cdot w=e^{m}\cdot w=0$ for all $m\geq K(w)$. Take any nonzero element $$x=\sum_{i=0}^{r}g_{i}\left(\overline{h}\right)h^{i}\otimes w_{i}\in M,$$ where all $g_{i}\left(\overline{h}\right)\in \C\left[\,\overline{h}\,\right]$, $w_{i}\in L(\eta,\theta)$, $g_{r}\left(\overline{h}\right)\neq0$, $w_{r}\neq0$ and $r\in \Z_{+}$ is minimal. Now we have the following claims.
\begin{clai} \label{claim1}$r=0$.
\end{clai}
 Here we only consider the case $V=\Gamma$, since the other two cases can be proved similarly. Let $K=\mbox {max}\{K(w_{i})\,|\, i=0,\cdots r\}$. It follows that
 \begin{equation*}
   \lambda^{-m}\overline{e}^{m}\cdot x=\sum_{i=0}^{r}g_{i}\left(\overline{h}\right)(h-2m)^{i}\otimes w_{i}\in M ,\ \ \ \mbox {for\ any}\ m\geq K.
 \end{equation*}
 Rewrite the right-hand side of the above formula as
 \begin{equation*}
   \sum_{i=0}^{r}m^{i}x_{i}\in M ,\ \ \ \mbox {for\ any}\ m\geq K,
 \end{equation*}
 where all $x_{i}\in V(\lambda,a,b_{\b})\otimes L(\eta,\theta)$ are independent of $m$. Taking $m=K,K+1\cdots, K+r$, we see that the coefficient matrix of $x_{i}$ is a Vandermonde matrix. So $x_{i}\in M$ for all $i=0,\cdots r$. In particular, $x_{r}=(-2)^{r}g_{r}\left(\overline{h}\right)\otimes w_{r}\in M$. Thus, $r$ must be zero by its minimality.
 \begin{clai} $M=V(\lambda,a,b_{\b})\otimes L(\eta,\theta)$.
\end{clai}
 By Claim \ref{claim1}, $g_{0}\left(\overline{h}\right)\otimes w_{0}\in M$. Thus, $\C\left[\,h,\overline{h}\,\right]g_{0}\left(\overline{h}\right)\otimes w_{0}\subseteq M$. Now we consider the following cases.

 \noindent{\bf Case 1:} $V=\Gamma$. Fix this $w_{0}$ and let $$P_{\Gamma}=\{g\left(h,\overline{h}\right)\in \C\left[\,h,\overline{h}\,\right]\,|\, g\left(h,\overline{h}\right)\otimes w_{0}\in M \}.$$
  Note that $P_{\Gamma}$ is not an empty set as $g_{0}\left(\overline{h}\right)\otimes w_{0}\in P_{\Gamma}$. 
  Suppose that deg$_{\overline{h}}g_{0}\left(\overline{h}\right)=j$.
  Without loss of generality, we can assume $j\geq K(w_{0})$, 
  applying $e^{j}$ on $g_{0}\left(\overline{h}\right)\otimes w_{0}$, we get
 \begin{align*}
   e^{j}\cdot \left(g_{0}\left(\overline{h}\right)\otimes w_{0}\right)=e^{j}\cdot g_{0}\left(\overline{h}\right)\otimes w_{0}=(-2\lambda)^{j}1\otimes w_{0}\in M,
 \end{align*}
which implies that $1\otimes w_{0}\in M$. Hence, $\C\left[\,h,\overline{h}\,\right]\subseteq P_{\Gamma}$, which in turn force that $\C\left[\,h,\overline{h}\,\right]=P_{\Gamma}$. Let $$P'_{\Gamma}:=\{u\in L(\eta,\theta)\,|\,\C\left[\,h,\overline{h}\,\right]\otimes u \subseteq M\}.$$ The previous argument implies that $P'_{\Gamma}\neq \emptyset$. Since $P'_{\Gamma}$ is a $\g$-submodule of $L(\eta,\theta)$, we have $M=V(\lambda,a,b_{\b})\otimes L(\eta,\theta)$ by the irreducibility of $L(\eta,\theta)$, as desired.

\noindent{\bf Case 2:} $V=\Theta$. Applying some linear combination of $e^{i}\overline{e}^{j}$ with $i,j\in \Z_{+}$ on $g_{0}\left(\overline{h}\right)\otimes w_{0}$, then by Definition \ref{defi01} and the fact that  $\C\left[\,h,\overline{h}\,\right]g_{0}\left(\overline{h}\right)\otimes w_{0}\subseteq M$, we obtain $$g_{0}\left(\overline{h}\right)\otimes v\in M,$$ where $v$ is the highest weight vector of $L(\eta,\theta)$. Moreover, $\C\left[\,h,\overline{h}\,\right]g_{0}\left(\overline{h}\right)\otimes v\subseteq M$.
Write $w_{0}=\sum_{i=0}^{s}\sum_{j=0}^{l}a_{i,j}f^{i}\overline{f}^{j}\cdot v$ with $a_{i,j}\in \C$, $a_{s,l}\neq 0$, and denote $\sum_{i=0}^{s}\sum_{j=0}^{l}a_{i,j}f^{i}\overline{f}^{j}$ by $x_{f,\overline{f}}$. Then applying it on $g_{0}\left(\overline{h}\right)\otimes v$, we get
\begin{align*}
  x_{f,\overline{f}}\cdot \left(g_{0}\left(\overline{h}\right)\otimes v\right) &= x_{f,\overline{f}}\cdot g_{0}\left(\overline{h}\right)\otimes v+ g_{0}\left(\overline{h}\right)\otimes x_{f,\overline{f}}\cdot v \\
 & =\SUM{j=0}{l}a_{0,j}\overline{f}^{j}\cdot g_{0}\left(\overline{h}\right)\otimes v+\cdots +\SUM{j=0}{l}a_{s,j}f^{s}\overline{f}^{j}\cdot g_{0}\left(\overline{h}\right)\otimes v+g_{0}\left(\overline{h}\right)\otimes w_{0}\\
 &=\SUM{j=0}{l}a_{0,j}\lambda^{j}\cdot g_{0}\left(\overline{h}\right)\otimes v+\cdots +\SUM{j=0}{l}2^{s}\lambda^{j+s}a_{s,j} \overline{\partial}^{s}\left(g_{0}\left(\overline{h}\right)\right)\otimes v+g_{0}\left(\overline{h}\right)\otimes w_{0}\in M,
\end{align*}
where $\overline{\ptl}:=\frac{\ptl}{\ptl \overline{h}}$ is the partial derivative
with respect to $\overline{h}$ on $\C\left[\,h,\overline{h}\,\right]$. Thus, there is a polynomial $g_{1}\left(\overline{h}\right)\in \C\left[\,\overline{h}\,\right]$ with deg$_{\overline{h}}g_{1}\left(\overline{h}\right)<$deg$_{\overline{h}}g_{0}\left(\overline{h}\right)$ such that $g_{1}\left(\overline{h}\right)\otimes v\in M$. With a finite procedure, we get $1\otimes v\in M$. Then in a similar way in Case 1, we obtain $M=V(\lambda,a,b_{\b})\otimes L(\eta,\theta)$.

\noindent{\bf Case 3:} $V=\Omega$. We can have $M=V(\lambda,a,b_{\b})\otimes L(\eta,\theta)$ in a similar proof of Case 2 if $a\neq 0$. Now let $a=0$. It is clear that $\C\left[\,h,\overline{h}\,\right]\overline{h}\otimes L(\eta,\theta)$ is a submodule of $V(\lambda,a,b_{\b})\otimes L(\eta,\theta)$.

Therefore, by above three cases, we complete the proof.
\end{proof}
\section{Isomorphism classes of the tensor product modules}
In this section, we classify the isomorphism classes of the tensor product $\g$-modules $V(\lambda,a,b_{\b})\otimes L(\eta,\theta)$, where $a\in \C^{\times}$ and $b_{\b}=\beta\left(\overline{h}\right)$ when $V=\Omega$. 
\begin{theo}Let $\lambda_{i}\in \C^{\times}$, $a_{i},b_{i},\eta_{i},\theta_{i}\in \C$, $\beta_{i}\left(\overline{h}\right)\in \C\left[\,\overline{h}\,\right]$ and $L(\eta_{i},\theta_{i})$ be irreducible weight modules, where $i=1,2$. Let $V(\lambda_{i},a_{i},b_{\beta_{i}})$ be the $\g$-modules constructed in Definition \ref{defi01} such that $b_{\beta_{i}}=b_{i}$ when $V=\Gamma, \Theta$ or $a_{i}\in \C^{\times}$ and $b_{\beta_{i}}=\beta_{i}\left(\overline{h}\right)$ when $V=\Omega$ for $i=1,2$. Then $V(\lambda_{1},a_{1},b_{\beta_{1}})\otimes L(\eta_{1},\theta_{1})$ is isomorphic to  $V(\lambda_{2},a_{2},b_{\beta_{2}})\otimes L(\eta_{2},\theta_{2})$ as $\g$-modules if and only if $V(\lambda_{1},a_{1},b_{\beta_{1}})\cong V(\lambda_{2},a_{2},b_{\beta_{2}})$ and $L(\eta_{1},\theta_{1})\cong L(\eta_{2},\theta_{2})$.
\end{theo}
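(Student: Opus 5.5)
The "if" direction is immediate: tensoring isomorphisms $V_1\cong V_2$ and $L_1\cong L_2$ gives an isomorphism of the tensor products. For the converse, let $\varphi: V(\lambda_{1},a_{1},b_{\beta_{1}})\otimes L(\eta_{1},\theta_{1})\to V(\lambda_{2},a_{2},b_{\beta_{2}})\otimes L(\eta_{2},\theta_{2})$ be an isomorphism. The plan is to recover each tensor factor from the tensor product via intrinsic data preserved by $\varphi$. Write $v_1$ for the highest weight vector of $L(\eta_1,\theta_1)$.

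\textbf{Step 1: the image of $1\otimes v_1$.} Write
\[
\varphi(1\otimes v_{1})=\sum_{i} g_{i}\otimes w_{i},
\]
with $g_i\in\C[h,\overline{h}]$ and the $w_i$ linearly independent weight vectors of $L(\eta_2,\theta_2)$. Both $h$ and $\overline{h}$ act on $V\otimes L$ as multiplication in the first factor plus the weight in the second factor; $\overline{h}$ may act non-semisimply on $L$, but it acts locally finitely, so we can work with generalized weights. The annihilating relations satisfied by $1\otimes v_1$ must transfer to its image. The key one is that $\overline{e}^{m}\cdot(1\otimes v_1)$ and $e^{m}\cdot(1\otimes v_1)$ are computed purely in the $V$-factor, since $v_1$ is killed by $e$ and $\overline{e}$. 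This yields explicit formulas depending only on $V_1$. For large $m$ the operators $\overline{e}^m$ and $e^m$ also kill every $w_i$, so on the image side $\overline{e}^m$ and $e^m$ act only on the first factor of $\varphi(1\otimes v_1)$.

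\textbf{Step 2: transporting the action.} Using the Vandermonde-type argument from Claim~\ref{claim1}, expand $\overline{e}^{m}$ applied to both sides as polynomials in $m$ and compare coefficients. On the $V$-side the action of $\overline{e}$ is a shift in $h$ times a fixed multiplier (for $\Gamma$ the multiplier is $\lambda$; for $\Theta$ it is $-(\overline{h}^2+a)/4\lambda$; for $\Omega$ it is $\lambda(\overline{h}+a)/2$). The growth in $m$ and the multiplier polynomial must therefore match, which forces $V_1$ and $V_2$ to be of the same type with equal $\lambda$ and $a$. We then pin down $\varphi(1\otimes v_1)$ to the form $g\otimes v_2$, where $v_2$ is a highest weight vector of $L(\eta_2,\theta_2)$: any lower components $w_i$ can be removed because $\overline{e}^m$ and $e^m$ eventually kill them while $\varphi$ commutes with $\overline{e}$. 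Comparing the actions of $h-\theta$ and $\overline{h}-\eta$ then identifies the weights, giving $\theta_1=\theta_2$ and $\eta_1=\eta_2$. Hence $L(\eta_1,\theta_1)\cong L(\eta_2,\theta_2)$.

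\textbf{Step 3: recovering $V$.} The vector $1\otimes v_1$ generates the $U(\overline{\mathfrak{h}})$-submodule $\C[h,\overline{h}]\otimes v_1$, which is free of rank one. By Step 2 this submodule maps to $\C[h,\overline{h}]g\otimes v_2$. Modulo lower weight parts of $L$, the map $\C[h,\overline{h}]\otimes v_1\to \C[h,\overline{h}]\otimes v_2$ intertwines the actions of $e$, $\overline{e}$ and $\overline{\mathfrak{h}}$. Projecting onto the $v_2$-component, which is legitimate because $f$ and $\overline{f}$ lower weight and so only affect lower components, we obtain a nonzero homomorphism between $V_1$ and $V_2$ as modules over $\overline{\mathfrak{n}}_+\oplus\overline{\mathfrak{h}}$. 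From this we read off the remaining parameters $b$ (respectively $\beta$), using the $f$-action formulas together with Proposition~\ref{prop-2.2a}(ii).

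\textbf{Main obstacle.} The hardest step is Step 2: showing that $\varphi(1\otimes v_1)$ has no components along lower weight vectors of $L_2$, and controlling the non-semisimplicity of $\overline{h}$ in $L$. My first attempt would be to apply $\overline{e}^{m}$ for large $m$ and use the injectivity of $\varphi$ together with Vandermonde arguments, exactly as in Claim~\ref{claim1}.
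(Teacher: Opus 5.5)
Your Step 2 is where the proof breaks, and the break cannot be repaired.

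First, you claim that for large $m$ the operators $\overline{e}^{m}$ and $e^{m}$ act only on the first tensor factor of $\varphi(1\otimes v_{1})$. This is false: $\overline{e}^{m}\cdot(g\otimes w)=\sum_{k}\binom{m}{k}(\overline{e}^{m-k}\cdot g)\otimes(\overline{e}^{k}\cdot w)$, and the terms with $1\le k<K$ survive. Being killed by high powers of $e,\overline{e}$ holds for every vector of $L(\eta_{2},\theta_{2})$, so it cannot single out $v_{2}$. Nothing in your argument forces $\varphi(1\otimes v_{1})=g\otimes v_{2}$.

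Second, grant even $\varphi(1\otimes v_{1})=1\otimes v_{2}$. Comparing the actions of $h$ and $\overline{h}$ then only gives $\varphi(p(h,\overline{h})\otimes v_{1})=p(h+\theta_{2}-\theta_{1},\overline{h}+\eta_{2}-\eta_{1})\otimes v_{2}$. For $V=\Gamma$ this is compatible with $\overline{e}$ and $e$, which act on the first factor by $\lambda\sigma$ and $-2\lambda\overline{\partial}\sigma$ ($\sigma$ the shift $h\mapsto h-2$). Both operators commute with translations, so $\theta_{1}=\theta_{2}$ and $\eta_{1}=\eta_{2}$ do not follow.

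Step 3 fails in the same way. The restriction of $\Gamma(\lambda,a,b)$ to $\mathfrak{p}=\overline{\mathfrak{n}}_{+}\oplus\overline{\mathfrak{h}}$ does not involve $a$ or $b$ at all. The $f$- and $\overline{f}$-actions cannot be handled by projecting onto the $v_{2}$-component, because $\varphi(1\otimes f\cdot v_{1})$ and $\varphi(1\otimes\overline{f}\cdot v_{1})$ are not controlled.

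These gaps cannot be closed, because the statement is false for $V=\Gamma$. Use the tensor identity $V\otimes\big(U(\g)\otimes_{U(\mathfrak{p})}\C_{\eta,\theta}\big)\cong U(\g)\otimes_{U(\mathfrak{p})}\big(V|_{\mathfrak{p}}\otimes\C_{\eta,\theta}\big)$ together with the translation $p(h,\overline{h})\mapsto p(h-\theta,\overline{h}-\eta)$. This gives $\Gamma(\lambda,a,b)|_{\mathfrak{p}}\otimes\C_{\eta,\theta}\cong\Gamma(\lambda,a,b)|_{\mathfrak{p}}\cong U(\mathfrak{p})\otimes_{U(\overline{\mathfrak{n}}_{+})}\C_{(0,\lambda)}$, where $e\mapsto 0$ and $\overline{e}\mapsto\lambda$. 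Hence $\Gamma(\lambda,a,b)\otimes\overline{L}(\eta,\theta)\cong U(\g)\otimes_{U(\overline{\mathfrak{n}}_{+})}\C_{(0,\lambda)}$, the universal Whittaker module $\mathrm{Ind}(N_{\underline{\mu}})$ with $\underline{\mu}=(0,\lambda)$. This holds for all $a,b,\eta,\theta$, with $1\otimes\overline{v}$ corresponding to the generator.

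The paper's own Theorem \ref{theo:6.4} says the same thing. The $\mathfrak{b}$-module $\C[\,\overline{h}\,]^{\Gamma}$ of Definition \ref{def:6.1} involves neither $a$, $b$ nor $\theta$, and $\eta$ is removed by $\overline{h}\mapsto\overline{h}-\eta$.

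Concrete counterexamples follow:
\begin{itemize}
\item For $\eta\neq 0$ the Verma module is simple. So, for instance, $\Gamma(\lambda,a,b)\otimes L(1,0)\cong\Gamma(\lambda,a',b')\otimes L(2,1)$ for all $a,b,a',b'$.
\item Even with $\eta=0$, inducing in the same way from $\mathrm{span}\{e,h\}\oplus(\mathfrak{sl}_{2}\otimes t)$ gives $\Gamma(\lambda,a,b)\otimes L(0,\theta)\cong\Gamma(\lambda,a,b')\otimes L(0,\theta')$ for all $b,b'$ and all $\theta,\theta'\notin\Z_{+}$.
\end{itemize}

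The paper's proof does not escape this either. Equation \eqref{eq4.1} holds only when $\overline{e}\cdot w=0$, and the computation after it lets $\overline{e}^{m}$ act on the first factor only, which is the same invalid step as yours. The statement has to be corrected before any proof strategy can succeed.
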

\begin{proof}The sufficiency is obvious and it suffices to show the necessity. 

\noindent{\bf Case 1:} $V=\Gamma$. Let $\psi_{1}: \Gamma(\lambda_{1},a_{1},b_{1})\otimes L(\eta_{1},\theta_{1})\longrightarrow \Gamma(\lambda_{2},a_{2},b_{2})\otimes L(\eta_{2},\theta_{2})$ be a $\g$-module isomorphism. Take a nonzero element $w\in L(\eta_{1},\theta_{1})$ and suppose
$$\psi_{1}(1\otimes w)=\sum_{i=0}^{r}g_{i}\left(\overline{h}\right)h^{i}\otimes w_{i},$$ 
where $g_{i}\left(\overline{h}\right)\in \C\left[\,\overline{h}\,\right]$, $w_{i}\in L(\eta_{2},\theta_{2})$ with $g_{r}\left(\overline{h}\right)\neq 0$ and $w_{r}\neq 0$. 
 There is a positive integer $K$ such that $e^{m}\cdot w= e^{m}\cdot w_{i}=\overline{e}^{m}\cdot w= \overline{e}^{m}\cdot w_{i}=0$ for all $m \geq K$ and $0\leq i \leq r$. Let $m_{1}, m_{2}$ be two different integers such that $m_{1}, m_{2}\geq K$. Then it follows that 
\begin{equation}\label{eq4.1}
  \left(\lambda_{1}^{-m_{1}}\overline{e}^{m_{1}}-\lambda_{1}^{-m_{2}}\overline{e}^{m_{2}}\right)\cdot (1\otimes w)=0.
\end{equation}
Thus applying $\psi_{1}$ on both sides of equation (\ref{eq4.1}), we get
\begin{align}\label{eq4.2}
	0 &= \left(\lambda_{1}^{-m_{1}}\overline{e}^{m_{1}}-\lambda_{1}^{-m_{2}}\overline{e}^{m_{2}}\right)\cdot \psi_{1}(1\otimes w) \nonumber \\
	&= \sum_{i=0}^{r}\left(\lambda_{1}^{-m_{1}}\overline{e}^{m_{1}}-\lambda_{1}^{-m_{2}}\overline{e}^{m_{2}}\right)\cdot \left(g_{i}\left(\overline{h}\right)h^{i}\otimes w_{i}\right) \nonumber \\
	&= \sum_{i=0}^{r}\left(\left(\frac{\lambda_{2}}{\lambda_{1}}\right)^{m_{1}}g_{i}\left(\overline{h}\right)(h-2m_{1})^{i}\otimes w_{i}-\left(\frac{\lambda_{2}}{\lambda_{1}}\right)^{m_{2}}g_{i}\left(\overline{h}\right)(h-2m_{2})^{i}\otimes w_{i}\right).
\end{align}
Comparing the coefficients of $h^{r}$ in (\ref{eq4.2}), we can see that $$\left(\left(\frac{\lambda_{2}}{\lambda_{1}}\right)^{m_{1}}-\left(\frac{\lambda_{2}}{\lambda_{1}}\right)^{m_{2}}\right)
g_{r}\left(\overline{h}\right)\otimes w_{r}=0,$$ which implies that $\lambda_{1}=\lambda_{2}$. Denote $\lambda=\lambda_{1}=\lambda_{2}$. Now observing the equation (\ref{eq4.2}), we find that $r=0$, i.e., $\psi_{1}(1\otimes w)=g_{0}\left(\overline{h}\right)\otimes w_{0}$. 
\\
For any $m\geq K$, the equations
\begin{align*}
	\psi_{1}\left(\left(-4\overline{e}^{m}\overline{f}-\overline{e}^{m-1}\overline{h}^2\right)\left(1\otimes w\right)\right)&=\left(   -4\overline{e}^{m}\overline{f}-\overline{e}^{m-1}\overline{h}^2\right)\psi_1\left(1\otimes w\right), \\
	\psi_{1}\left(\overline{e}^{m}h\left(1\otimes w\right)\right)&=\overline{e}^{m}h\psi_1\left(1\otimes w\right),\\
	\psi_{1}\left(\overline{e}^{m}\overline{h}\left(1\otimes w\right)\right)&=\overline{e}^{m}\overline{h}\psi_1\left(1\otimes w\right),
\end{align*}
are, respectively, equivalent to
\begin{align*}
	a_{2}\lambda^{m-1}g_{0}\left(\overline{h}\right)\otimes w_{0}&=a_{1}\lambda^{m-1}g_{0}\left(\overline{h}\right)\otimes w_{0}, \\
	\lambda^m\left(\psi_1\left(h-2m\right)\otimes w\right)&=\lambda^m\left(h-2m\right)g_{0}\left(\overline{h}\right)\otimes w_{0}, \\
	\lambda^m\psi_1\left(\overline{h}\otimes w\right)&=\lambda^m\overline{h}g_{0}\left(\overline{h}\right)\otimes w_{0},
\end{align*}
we can see that $a_{1}=a_{2}$, denote $a=a_{1}=a_{2}$,
\begin{align}
	\psi_{1}\left(h\otimes w\right)&=hg_{0}\left(\overline{h}\right)\otimes w_{0}, \\
	\psi_{1}\left(\overline{h}\otimes w\right)&=\overline{h}g_{0}\left(\overline{h}\right)\otimes w_{0}, \\
	\psi_{1}\left(h\overline{h}\otimes w\right)&=h\overline{h}g_{0}\left(\overline{h}\right)\otimes w_{0}. 
\end{align}
From (4.3)-(4.5) and 
\begin{equation*}
	\psi_{1}\left(\overline{e}^{m}f\left(1\otimes w\right)\right)=\overline{e}^{m}f\psi_{1}\left(1\otimes w\right), \ \ \ \mbox {where}\ m\geq K, 
\end{equation*}
we deduce that 
\begin{equation*}
	\ b_{1}g_{0}\left(\overline{h}\right)\otimes w_{0}=\ b_{2}g_{0}\left(\overline{h}\right)\otimes w_{0}+\left(\overline{h}^{2}+a\right)\overline{\partial} g_{0}\left(\overline{h}\right)\otimes w_{0}. 
\end{equation*}
We can see that $b_{1}=b_{2},   \ g_{0}\left(\overline{h}\right) \in \C^{\times}$, we assume that $g_{0}\left(\overline{h}\right)=1$, thus $\psi_{1}\left(1\otimes w\right)=1\otimes w_{0}$. Hence $\Gamma(\lambda_{1},a_{1},b_{1})\cong \Gamma(\lambda_{2},a_{2},b_{2})$ by Proposition \ref{prop-2.2a}. Thus there exists a linear map $\tau_{1}:L(\eta_{1},\theta_{1})\longrightarrow L(\eta_{2},\theta_{2})$ such that 
\begin{equation}
	\psi_{1}\left(1\otimes w\right)=1\otimes\tau_{1}\left(w\right),\quad   \forall \,w\in L(\eta_{1},\theta_{1}).
\end{equation}
According to (4.3)-(4.6), we obtain 
\begin{equation*}
	\psi_{1}\left(\left(\g\cdot1\right)\otimes w\right)=\left(\g\cdot1\right)\otimes\tau_{1}\left(w\right),\quad\forall \,w\in L(\eta_{1},\theta_{1}). 
\end{equation*}
This, in conjunction with
\begin{equation*}
	\psi_{1}\left(\g\cdot\left(1\otimes w\right)\right)=\g\cdot\psi_{1}\left(1\otimes w\right),\quad\forall \,w\in L(\eta_{1},\theta_{1}), 
\end{equation*}
leads to
\begin{equation*}
	\psi_{1}\left(1\otimes \g\cdot w\right)=1\otimes\tau_{1}\left(\g\cdot w\right),\quad\forall \,w\in L(\eta_{1},\theta_{1}). 
\end{equation*}
Therefore, 
\begin{equation*}
	\tau_{1}\left(\g\cdot w\right)=\g\cdot\tau_{1}\left(w\right).
\end{equation*}
Thus, $\tau_{1}$ is a non-zero $\g$-module homomorphism. 
Given that $L(\eta_{1},\theta_{1})$ and $L(\eta_{2},\theta_{2})$ are simple $\g$-modules, $\tau_{1}$ is a $\g$-module isomorphism. \\
\noindent{\bf Case 2:}  $V=\Theta$. Let $\psi_{2}: \Theta(\lambda_{1},a_{1},b_{1})\otimes L(\eta_{1},\theta_{1})\longrightarrow \Theta(\lambda_{2},a_{2},b_{2})\otimes L(\eta_{2},\theta_{2})$ be a $\g$-modules isomorphism. Take a nonzero element $w\in L(\eta_{1},\theta_{1})$ and suppose $$\psi_{2}(1\otimes w)=\sum_{i=0}^{r}g_{i}\left(\overline{h}\right)h^{i}\otimes w_{i},$$ where $g_{i}\left(\overline{h}\right)\in \C\left[\,\overline{h}\,\right]$, $w_{i}\in L(\eta_{2},\theta_{2})$ with $g_{r}\left(\overline{h}\right)\neq 0$ and $w_{r}\neq 0$. There is a positive integer $K$ such that $e^{m}\cdot w= e^{m}\cdot w_{i}=\overline{e}^{m}\cdot w= \overline{e}^{m}\cdot w_{i}=0$ for all $m \geq K$ and $0\leq i \leq r$. Then it follows that
\begin{equation}
	\left(\left(-4\lambda_{1}\right)^{m}\overline{e}^{m}-\left(-4\lambda_{1}\right)^{m-1}\overline{e}^{m-1}\overline{h}^{2}-\left(-4\lambda_{1}\right)a_{1}\overline{e}^{m-1}\right)\left(1\otimes w\right)=0. 
\end{equation}
Applying $\psi_{2}$ on both sides of equation (4.7), we get
\begin{align}
	0 &= \left((-4\lambda_1)^m\,\overline{e}^m-(-4\lambda_1)^{m-1}\,\overline{e}^{m-1}\overline{h}^2-(-4\lambda_1)\,a_1\overline{e}^{m-1}\right)\psi_2\,(1\otimes w) \nonumber \\
	\begin{split}
		&= \sum_{i=0}^r \Bigg( \left(\frac{\lambda_1}{\lambda_2}\right)^m \left(\overline{h}^2+a_2\right)^m (h-2m)^i \\
		&\quad - \left(\frac{\lambda_1}{\lambda_2}\right)^{m-1} \left(\overline{h}^2+a_2\right)^{m-1} \left(\overline{h}^2+a_1\right) (h-2(m-1))^i \Bigg) g_i\left(\overline{h}\right) \otimes w_i. \label{eq,4.8}
	\end{split} 
\end{align}
Comparing the coefficients of $h^{r}$ in (4.8), we can see that
\begin{equation*}
	\left(\left(\frac{\lambda_1}{\lambda_2}\right)^m \left(\overline{h}^2+a_2\right)^m-\left(\frac{\lambda_1}{\lambda_2}\right)^{m-1}\left(\overline{h}^2+a_2\right)^{m-1}\left(\overline{h}^2+a_1\right)\right)g_{r}(\overline{h})\otimes w_{r}=0. 
\end{equation*}
Since $g_{r}\neq0, w_{r}\neq0$, we can conclude $\lambda_{1}=\lambda_{2}, a_{1}=a_{2}$. Denote $\lambda=\lambda_{1}=\lambda_{2}, a=a_{1}=a_{2}$. Rewriting the equation (4.8), we have 
\begin{equation*}
	\SUM{i=0}{r}\left(\left(h-2m\right)^{i}-\left(h-2(m-1)\right)^{i}\right)\left(\overline{h}^{2}+a\right)^{m}g_{i}\left(\overline{h}\right)\otimes w_{i}=0. 
\end{equation*}
If $r\geq1$, the highest degree term in $h$ is
\begin{equation*}
	-2rh^{r-1}\left(\overline{h}^{2}+a\right)^{m}g_{r}\left(\overline{h}\right)\otimes w_{r}=0. 
\end{equation*} 
This leads to a contradiction, hence we conclude that $r=0, \psi_{2}\left(1\otimes w\right)=g_{0}(\overline{h})\otimes w_{0}$. For any $m\geq K$, the equations
\begin{align*}
	\psi_{2}\left(\overline{e}^{m}f\left(1\otimes w\right)\right)&=\overline{e}^{m}f\psi_{2}\left(1\otimes w\right), \\
	\psi_{2}\left(\overline{e}^{m}\left(1\otimes w\right)\right)&=\overline{e}^{m}\psi_{2}\left(1\otimes w\right), \\
	\psi_{2}\left(\overline{e}^{m}\overline{h}\left(1\otimes w\right)\right)&=\overline{e}^{m}\overline{h}\psi_{2}\left(1\otimes w\right), \\
	\psi_{2}\left(\overline{e}^{m}h\left(1\otimes w\right)\right)&=\overline{e}^{m}h\psi_{2}\left(1\otimes w\right), \\
	\psi_{2}\left(\overline{e}^{m}h\overline{h}\left(1\otimes w\right)\right)&=\overline{e}^{m}h\overline{h}\psi_{2}\left(1\otimes w\right),
\end{align*}
are, respectively, equivalent to
\begin{align}
	\left(\overline{h}^{2}+a\right)^{m}\overline{\partial} g_{0}\left(\overline{h}\right)\otimes w_{0}&=0,\label{eq,4.9}\\
	\psi_{2}\left(\left(\overline{h}^{2}+a\right)^{m}\otimes w\right)&=\left(\overline{h}^{2}+a\right)^{m}\otimes w_{0},\label{eq,4.10}\\
	\psi_{2}\left(\left(\overline{h}^{2}+a\right)^{m}\overline{h}\otimes w\right)&=\left(\overline{h}^{2}+a\right)^{m}\overline{h}\otimes w_{0},\label{eq,4.11}\\
	\psi_{2}\left(\left(\overline{h}^{2}+a\right)^{m}h\otimes w\right)&=\left(\overline{h}^{2}+a\right)^{m}h\otimes w_{0},\label{eq,4.12}\\
	\psi_{2}\left(\left(\overline{h}^{2}+a\right)^{m}\overline{h}h\otimes w\right)&=\left(\overline{h}^{2}+a\right)^{m}\overline{h}h\otimes w_{0}.  \label{eq,4.13}
\end{align}
From \eqref{eq,4.9}, we can get that $g_{0}\left(\overline{h}\right)\in \C^{\times}$, we assume that $g_{0}\left(\overline{h}\right)=1$, thus $\psi_{2}\left(1\otimes w\right)=1\otimes w_{0}$.
From \eqref{eq,4.10}-\eqref{eq,4.13} and 
\begin{equation*}
	\psi_{2}\left(\overline{e}^{m}e\cdot\left(1\otimes w\right)\right)=\overline{e}^{m}e\cdot\psi_{2}\left(1\otimes w\right),\ \ \ \mbox {where}\ m\geq K, 
\end{equation*}
we can see that
\begin{equation*}
 	b_{1}\otimes w_{0}=b_{2}\otimes w_{0}, 
\end{equation*}
thus $b_{1}=b_{2}$. Hence $\Theta(\lambda_{1},a_{1},b_{1})\cong \Theta(\lambda_{2},a_{2},b_{2})$ by Proposition \ref{prop-2.2a}. Thus there exists a linear map $\tau_{2}:L(\eta_{1},\theta_{1})\longrightarrow L(\eta_{2},\theta_{2})$ such that
\begin{equation}
	\psi_{2}\left(1\otimes w\right)=1\otimes\tau_{2}\left(w\right),\quad   \forall \,w\in L(\eta_{1},\theta_{1}). 
\end{equation} \label{eq,4.14}
In $\Theta\left(\lambda,a,b\right)$, if $a\neq0$, we obtain 
\begin{equation*}
	\SUM{k=0}{m}\binom{m}{k}a^{m-k}\left(\psi_{2}\left(\overline{h}^{2k}\otimes w\right)-\overline{h}^{2k}\otimes w_{0}\right)=0,\quad k\in\Z_{+},
\end{equation*}
by \eqref{eq,4.10}. Regard it as a polynomial in $a$, we can see that $\psi_{2}\left(\overline{h}^{2k}\otimes w\right)=\overline{h}^{2k}\otimes w_{0}$. When $k=1$, we can get 
\begin{equation}
	\psi_{2}\left(\overline{h}^{2}\otimes w\right)=\overline{h}^{2}\otimes w_{0}. \label{eq,4.15}
\end{equation}
In equations \eqref{eq,4.11}-\eqref{eq,4.13}, by the similar approach, we can derive 
\begin{align}
	\psi_{2}\left(\overline{h}\otimes w\right)&=\overline{h}\otimes w_{0}, \label{eq,4.16}\\
	\psi_{2}\left(h\otimes w\right)&=h\otimes w_{0}, \label{eq,4.17}\\
	\psi_{2}\left(h\overline{h}\otimes w\right)&=h\overline{h}\otimes w_{0}. \label{eq,4.18} 
\end{align}
Suppose $a=0$, the equation \eqref{eq,4.11} becomes $$	\psi_{2}\left(\left(\overline{h}^{2}-z+z\right)^{m}\overline{h}\otimes w\right)=\left(\overline{h}^{2}-z+z\right)^{m}\overline{h}\otimes w_{0},\quad z\in\C^{\times}.$$ Using a method similar to that for the case  $a\neq0$, we again derive equations \eqref{eq,4.15}-\eqref{eq,4.18}. From these equations, it follows that the isomorphism between $L\left(\eta_{1},\theta_{1}\right)$ and $ L\left(\eta_{2},\theta_{2}\right)$ can be established by a proof similar to that in Case 1. We therefore conclude that $L\left(\eta_{1},\theta_{1}\right)\cong L\left(\eta_{2},\theta_{2}\right)$.\\
\noindent{\bf Case 3:}  $V=\Omega$. A proof similar to Case 2 shows that if $\Omega\left(\lambda_{1},a_{1},\beta_{1}\left(\overline{h}\right)\right)\otimes L\left(\eta_{1},\theta_{1}\right)\cong\Omega\left(\lambda_{2},a_{2},\beta_{2}\left(\overline{h}\right)\right)\otimes L\left(\eta_{2},\theta_{2}\right)$, then  $\Omega\left(\lambda_{1},a_{1},\beta_{1}\left(\overline{h}\right)\right)\cong\Omega\left(\lambda_{2},a_{2},\beta_{2}\left(\overline{h}\right)\right)$ and $L\left(\eta_{1},\theta_{1}\right)\cong L\left(\eta_{2},\theta_{2}\right)$.

We have thus exhausted all cases, which completes the proof.
\end{proof}
\section{Comparison of tensor product modules with known non-weight modules}
In this section, we will compare the tensor product modules is different from all known irreducible non-weight $\g$-modules. 
Let $\underline{\mu}=\left(\mu_{1},\mu_{2}\right)\in\C^{2}$. Assume that $J_{\underline{\mu}}$ is the left ideal of $U\left(\overline{\mathfrak{n}}_{+}\right)$ generated by $\left\{e-\mu_{1},\overline{e}-\mu_{2}\right\}$. Denote $N_{\underline{\mu}}= U\left(\overline{\mathfrak{n}}_{+}\right)/J_{\underline{\mu}}$. Ind$\left(N_{\underline{\mu}}\right)= U\left(\g\right)\otimes_{U\left(\overline{\mathfrak{n}}_{+}\right)} N_{\underline{\mu}}$ is a universal Whittaker module. 
For any $r>deg_{h}g\left(h,\overline{h}\right)$, we denote
\begin{align*}
	w^{(r)}&=\sum_{i=0}^{r}\binom{r}{i}\left(-1\right)^{r-i}\lambda^{-i}\overline{e}^{i},\quad \text{when} \ V=\Gamma,\\
	w^{(r)}&=\sum_{i=0}^{r}\binom{r}{i}\left(-1\right)^{r-i}\lambda^{-i}\overline{f}^{i},\quad \text{when} \ V=\Theta,\\
	w^{(r)}&=\sum_{i=0}^{r}\binom{r}{i}\left(-1\right)^{r-i}\left(\frac{a^{2}}{4}\right)^{-i}\left(\overline{e}\overline{f}+\frac{1}{4}\overline{h}^{2}\right)^{i},\quad \text{when} \ V=\Omega. 
\end{align*} 
\begin{lemm}\label{lem:5.1}
	Let $V\left(\lambda,a,b_{\beta}\right)$ and $L\left(\eta,\theta\right)$ be the $\g$-modules defined as in Section \ref{sec:preliminaries}. Then, the following statements hold:
	\begin{itemize}
		\item [\rm(i)] For any $g\left(h,\overline{h}\right)\in V\left(\lambda,a,b_{\beta}\right)$, we have 
		\begin{eqnarray*}
			w^{(r)}\cdot g\left(h,\overline{h}\right)=0,\quad \forall \,r>deg_{h}g\left(h,\overline{h}\right). 
		\end{eqnarray*}
		 \item[\rm(ii)] For any $ r>deg_{h}g\left(h,\overline{h}\right)$ and $0\neq g\left(h,\overline{h}\right)\in V\left(\lambda,a,b_{\beta}\right)$ when $V=\Omega, \lambda\neq a$, there exists $v\in L\left(\eta,\theta\right)$ such that
		 \begin{equation*}
		 	w^{(r)}\cdot\left(g\left(h,\overline{h}\right)\otimes v\right)\neq0. 
		 \end{equation*}
	\end{itemize}
	\begin{proof}
		(i) For any $g\left(\overline{h}\right)\in\C\left[\,\overline{h}\,\right]$ and $j\in\Z_{+}$.\\
		\noindent{\bf Case 1:} $V=\Gamma$.
		\begin{align*}
			w^{(r)}\cdot\left(h^{j}g\left(\overline{h}\right)\right)
			&=\sum_{i=0}^{r}\binom{r}{i}\left(-1\right)^{r-i}\lambda^{-i}\overline{e}^{i}\cdot\left(h^{j}g\left(\overline{h}\right)\right)\\
			&=\sum_{i=0}^{r}\binom{r}{i}\left(-1\right)^{r-i}\left(h-2i\right)^{j}g\left(\overline{h}\right)\\
			&=\sum_{i=0}^{r}\binom{r}{i}\left(-1\right)^{r-i}\left(a_{j}i^{j}+a_{j-1}i^{j-1}+\dots+a_{1}i+a_{0}\right)g\left(\overline{h}\right), 
		\end{align*}
		regarding $a_{0},a_{1}\dots a_{j}$ as polynomials in $h$. Using the identity equation 
		\begin{equation*}
			\sum_{i=0}^{r}\binom{r}{i}\left(-1\right)^{r-i}i^{j}=0,\quad\forall \ j,r\in\Z_{+},\ j<r,
		\end{equation*}
		we can get $w^{(r)}\cdot\left(h^{j}g\left(\overline{h}\right)\right)=0$.  \\
		\noindent{\bf Case 2:} $V=\Theta$.
		\begin{align*}
			w^{(r)}\cdot\left(h^{j}g\left(\overline{h}\right)\right)
			&=\sum_{i=0}^{r}\binom{r}{i}\left(-1\right)^{r-i}\lambda^{-i}\overline{f}^{i}\cdot\left(h^{j}g\left(\overline{h}\right)\right)\\
			&=\sum_{i=0}^{r}\binom{r}{i}\left(-1\right)^{r-i}\left(h+2i\right)^{j}g\left(\overline{h}\right)\\
			&=0.
		\end{align*}
		\noindent{\bf Case 3:} $V=\Omega$.
		\begin{align*}
			w^{(r)}\cdot\left(h^{j}g\left(\overline{h}\right)\right)
			&=\sum_{i=0}^{r}\binom{r}{i}\left(-1\right)^{r-i}\left(\frac{a^{2}}{4}\right)^{-i}\left(\overline{e}\overline{f}+\frac{1}{4}\overline{h}^{2}\right)^{i}\cdot\left(h^{j}g\left(\overline{h}\right)\right)\\
			&=\sum_{i=0}^{r}\binom{r}{i}\left(-1\right)^{r-i}h^{j}g\left(\overline{h}\right)\\
			&=0. 
		\end{align*}
		Thus $w^{(r)}\cdot g\left(h,\overline{h}\right)=0$.\\
		(ii) For any $ r>deg_{h}g\left(h,\overline{h}\right)$, take $v$ to be the highest weight vector of $L\left(\eta,\theta\right)$. \\
		\noindent{\bf Case 1:} $V=\Gamma$.
		\begin{align*}
			w^{(r)}\cdot \left(g\left(h,\overline{h}\right)\otimes v\right)&=\sum_{i=0}^{r}\binom{r}{i}\left(-1\right)^{r-i}\lambda^{-i}\overline{e}^{i}\cdot \left(g\left(h,\overline{h}\right)\otimes v\right)\\
			&=\sum_{i=0}^{r}\binom{r}{i}\left(-1\right)^{r-i}\lambda^{-i}g\left(h,\overline{h}\right)\otimes\overline{e}^{i}\cdot v\\
			&=\left(-1\right)^{r}g\left(h,\overline{h}\right)\otimes v\\
			&\neq0.
		\end{align*}
		\noindent{\bf Case 2:} $V=\Theta$.
		\begin{align*}
			w^{(r)}\cdot \left(g\left(h,\overline{h}\right)\otimes v\right)&=\sum_{i=0}^{r}\binom{r}{i}\left(-1\right)^{r-i}\lambda^{-i}\overline{f}^{i}\cdot \left(g\left(h,\overline{h}\right)\otimes v\right)\\
			&=\sum_{i=0}^{r}\binom{r}{i}\left(-1\right)^{r-i}\lambda^{-i}g\left(h,\overline{h}\right)\otimes\overline{f}^{i}\cdot v\\
			&\neq0.
		\end{align*}
		\noindent{\bf Case 3:} $V=\Omega, \lambda\neq a$.
		\begin{align*}
			w^{(r)}\cdot \left(g\left(h,\overline{h}\right)\otimes v\right)&=\sum_{i=0}^{r}\binom{r}{i}\left(-1\right)^{r-i}\left(\frac{a^{2}}{4}\right)^{-i}\left(\overline{e}\overline{f}+\frac{1}{4}\overline{h}^{2}\right)^{i}\cdot \left(g\left(h,\overline{h}\right)\otimes v\right)\\
			&=\sum_{i=0}^{r}\binom{r}{i}\left(-1\right)^{r-i}\left(\frac{a^{2}}{4}\right)^{-i}g\left(h,\overline{h}\right)\otimes \left(\overline{e}\overline{f}+\frac{1}{4}\overline{h}^{2}\right)^{i}\cdot v\\
			&=\sum_{i=0}^{r}\binom{r}{i}\left(-1\right)^{r-i}\left(\frac{\lambda^{2}}{a^{2}}\right)^{i}g\left(h,\overline{h}\right)\otimes v\\
			&=\left(\frac{\lambda^{2}}{a^{2}}-1\right)^{r}g\left(h,\overline{h}\right)\otimes v\\
			&\neq0.  
		\end{align*}
	     Thus, (ii) is proved, and we complete the proof. 
	\end{proof}
\end{lemm}	
\begin{prop}
	Let $\eta,\theta\in\C$ and not both be zero. Then $V\left(\lambda,a,b_{\beta}\right)\otimes L\left(\eta,\theta\right)$ is a new non-weight $\g$-module. 
\end{prop}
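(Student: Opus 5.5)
We have to show two things: that $V(\lambda,a,b_{\beta})\otimes L(\eta,\theta)$ is not isomorphic to any of the modules $V(\lambda',a',b'_{\beta'})$ of Definition \ref{defi01}, and that it is not isomorphic to any irreducible quotient of a universal Whittaker module Ind$(N_{\underline{\mu}})$. In each comparison we find an element of $U(\g)$, or a local-finiteness property, that separates the two sides. That the module is non-weight is clear: $h$ acts on the first factor by multiplication, which is torsion-free, so $h$ has no eigenvectors in the tensor product.

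\textbf{Comparison with $V(\lambda',a',b'_{\beta'})$.} We use Lemma \ref{lem:5.1}. Suppose $\phi:V(\lambda,a,b_\beta)\otimes L(\eta,\theta)\to V(\lambda',a',b'_{\beta'})$ is an isomorphism. We would rather work inside the tensor product, where Lemma \ref{lem:5.1}(ii) applies, so we pull back: for $g\in V(\lambda',a',b'_{\beta'})$, Lemma \ref{lem:5.1}(i) gives $w'^{(r)}\cdot g=0$ for all $r>\deg_h g$, where $w'^{(r)}$ is built with $\lambda',a'$. Transporting by $\phi^{-1}$, every element of the tensor product is killed by $w'^{(r)}$ for $r$ large.

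We then evaluate $w'^{(r)}$ on $g\otimes v$ as in the proof of Lemma \ref{lem:5.1}(ii), with $v$ the highest weight vector.
\begin{itemize}
\item If $V'=\Gamma$: the element $\overline{e}$ kills $v$, so $w'^{(r)}(g\otimes v)$ is, by the binomial identity, a combination $\sum_i\binom ri(-1)^{r-i}(\lambda/\lambda')^i(\ldots)$. For $r$ large this is nonzero unless $\lambda=\lambda'$ and $V=\Gamma$.
\item If $V'=\Theta$: the $\overline{f}^i v$ are linearly independent in $L(\eta,\theta)$ as long as $\overline{f}$ acts freely enough on $v$, which needs $(\eta,\theta)\neq(0,0)$; this is where the hypothesis enters. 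Then $w'^{(r)}(g\otimes v)\neq0$.
\item The mixed and $\Omega$ cases are handled the same way.
\end{itemize}

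The cleaner route, which I would attempt first, is to avoid comparing parameters and prove the following: in $V'$, every $g$ generates a finite-dimensional space under the operators $\lambda'^{-1}\overline{e}$ (in the $\Gamma$ case) that contains $g$ with eigenvalue-$1$ generalized eigenvectors. In the tensor product, the vector $1\otimes v'$ with $v'=f\cdot v\neq 0$ (nonzero since $(\eta,\theta)\neq 0$) produces, under $\overline{e}$, a component $1\otimes \overline{e}f v=1\otimes\eta v$ whose generalized eigenvalue structure differs. Concretely, we apply $w^{(r)}$ to $1\otimes fv$. One term is $(\ldots)\otimes fv$, and it vanishes by (i). The cross term is of the shape $r\,(\ldots)\otimes\eta v$, which does not vanish for large $r$ when $\eta\neq0$; the case $\eta=0$, $\theta\neq0$ is handled using $e$ and $[e,f]=h$ in place of $\overline{e}$. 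This contradicts (i) applied to $V'$.

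\textbf{Comparison with Whittaker modules.} Any irreducible quotient of Ind$(N_{\underline\mu})$ contains a vector $u$ with $e u=\mu_1u$ and $\overline{e}u=\mu_2u$. So it suffices to show that the tensor product has no common eigenvector of $e$ and $\overline{e}$.
\begin{itemize}
\item Write $x=\sum g_i\otimes w_i$, with the $w_i$ chosen as weight vectors of distinct weights in $L(\eta,\theta)$.
\item In the $\Gamma$ case, $\overline{e}$ acts as $\lambda\sigma\otimes 1+1\otimes\overline{e}$, where $\sigma$ is the shift $h\mapsto h-2$.
\item The component of top $L$-weight is fixed by $1\otimes\overline{e}$ up to lower terms. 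So comparing top-weight components gives $\lambda\sigma(g_{\rm top})=\mu_2 g_{\rm top}$, and this forces $g_{\rm top}$ to be constant in $h$ with $\mu_2=\lambda$.
\item Then $e$ acts on $g_{\rm top}$ by $-2\lambda\overline{\partial}$, which is nilpotent, so $\mu_1=0$ and $g_{\rm top}$ is a constant.
\item Descending one weight level yields an equation of the form $\lambda(\sigma-1)g_{\rm next}=-g_{\rm top}\otimes\overline{e}w$-type terms together with nilpotency constraints from $e$. These are inconsistent when $L(\eta,\theta)$ is nontrivial, that is, when $(\eta,\theta)\neq0$.
\end{itemize}

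\textbf{Main obstacle.} The hardest step is the Whittaker comparison in the $\Omega$ case, because there neither $e$ nor $\overline{e}$ acts as a pure shift. I would first try replacing the pair $e,\overline{e}$ by the Casimir-type element $\overline{e}\overline{f}+\frac14\overline{h}^2$, which acts as the scalar $a^2/4$ on $\Omega$, as the proof of Lemma \ref{lem:5.1} shows, and as $\lambda^2/4$-type scalars on highest weight vectors. Comparing its generalized eigenvalues on the two modules, under the condition $\lambda\neq a$ from Lemma \ref{lem:5.1}(ii), should separate them.
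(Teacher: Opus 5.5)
Your proposal has two genuine gaps. The first cannot be closed, because the statement is false in that case.

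\textbf{Whittaker comparison.} Reducing to the non-existence of a common eigenvector of $e$ and $\overline{e}$ is the right reduction; the paper instead presupposes that $\phi(1\otimes v)$ is the Whittaker vector. For $V=\Gamma$, however, the inconsistency you announce does not exist.

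In $\Gamma(\lambda,a,b)$ one has $e\cdot 1=-2\lambda\overline{\partial}(1)=0$ and $\overline{e}\cdot 1=\lambda$. Together with $ev=\overline{e}v=0$ this gives $e(1\otimes v)=0$ and $\overline{e}(1\otimes v)=\lambda(1\otimes v)$. Your top-component analysis ends at $g_{\rm top}=1$, $w_{\rm top}=v$, with no lower level to produce a contradiction.

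Moreover, $1\otimes v$ generates $\Gamma\otimes L(\eta,\theta)$:
\begin{itemize}
\item $h-\theta$ and $\overline{h}-\eta$ act on $\C[h,\overline{h}]\otimes v$ as multiplication by $h$ and $\overline{h}$;
\item $g\otimes fw=f(g\otimes w)-(f\cdot g)\otimes w$, and similarly for $\overline{f}$, so induction on $f^i\overline{f}^jv$ gives everything.
\end{itemize}
Hence $\Gamma(\lambda,a,b)\otimes L(\eta,\theta)$ is a quotient of $\mathrm{Ind}(N_{(0,\lambda)})$, i.e.\ a Whittaker module in exactly the sense used in the statement. For $V=\Gamma$ this half of the proposition is false, and no argument can establish it. The paper's Case 1 reaches its contradiction only through $e\cdot\phi(\overline{h}\otimes v)=0$, which treats a non-Whittaker vector as a Whittaker vector; in fact $e(\overline{h}\otimes v)=-2\lambda(1\otimes v)$.

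For $\Theta$ and $\Omega$, by contrast, your reduction works easily, so your ``main obstacle'' is misplaced. Write $x=\sum_k g_k\otimes w_k$ with the $w_k$ linearly independent. Acting on the first factor, $\overline{e}$ raises the $\overline{h}$-degree by exactly $2$ (resp.\ $1$). Acting on the second factor, $\overline{e}$ does not raise it, and neither does $\mu_2$. So $(\overline{e}-\mu_2)x$ has a nonzero top-degree part for every $x\neq0$. No Casimir element and no hypothesis $\lambda\neq a$ are needed. That hypothesis comes from a miscomputation in Lemma \ref{lem:5.1}(ii): $\overline{e}\overline{f}+\frac14\overline{h}^2$ is central and acts on $L(\eta,\theta)$ by $\eta^2/4$, not by $\lambda^2/4$.

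\textbf{Comparison with rank-one modules.} The operators $w^{(r)}$ cannot separate $\Gamma(\lambda,a,b)\otimes L(\eta,\theta)$ from $\Gamma(\lambda,a',b')$. This is exactly the case your first bullet leaves open, and your ``cleaner route'' does not fix it.

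On the tensor product, $\lambda^{-1}\overline{e}$ acts as $\sigma\otimes1+1\otimes\lambda^{-1}\overline{e}$, and the two summands commute. Hence $w^{(r)}=\sum_k\binom{r}{k}(\sigma-1)^{r-k}\otimes(\lambda^{-1}\overline{e})^k$. Since $\sigma-1$ is locally nilpotent on $\C[h,\overline{h}]$ and $\overline{e}$ is locally nilpotent on $L(\eta,\theta)$, every vector is killed by $w^{(r)}$ for $r\gg0$. In particular your cross term on $1\otimes fv$ is $r(\sigma-1)^{r-1}(1)\otimes\lambda^{-1}\eta v=0$ for $r\geq2$. Lemma \ref{lem:5.1}(ii), Case 1, has the same flaw: it lets $\overline{e}^i$ act only on the second factor.

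Your $\Theta$ bullet also fails when $\eta=0$. In that case $\overline{f}v$ is a singular vector, so $\overline{f}v=0$ in $L(0,\theta)$.

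What works uniformly is a rank count:
\begin{itemize}
\item Each $h$-weight space $L_\mu$ of $L(\eta,\theta)$ is finite-dimensional and $\overline{h}$-stable.
\item For $w\in L_\mu$ one has $p\cdot(1\otimes w)=p(h+\mu,\overline{h})\otimes w+(\text{lower first-factor }\overline{h}\text{-degree})$.
\item So the vectors $1\otimes w_j$, for a weight basis $\{w_j\}$ of $L(\eta,\theta)$, form a $U(\overline{\mathfrak{h}})$-basis. Thus $V\otimes L(\eta,\theta)$ is free of rank $\dim L(\eta,\theta)$.
\item This rank is at least $2$, since $efv=\theta v$ and $e\overline{f}v=\eta v$. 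This is exactly where $(\eta,\theta)\neq(0,0)$ is used.
\end{itemize}
Hence the tensor product is not isomorphic to any rank-one $U(\overline{\mathfrak{h}})$-free module, of any type and with any parameters.
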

\begin{proof}
	It must be shown that the tensor product module $V\left(\lambda,a,b_{\beta}\right)\otimes L\left(\eta,\theta\right)$ is isomorphic to neither a Whittaker module nor a free $U\left(\overline{\mathfrak{h}}\right)$-free modules of rank one. Let $W$ be a Whittaker module over $\g$ which is isomorphic to a quotient module of $\text{Ind}\left(N_{\underline{\mu}}\right)$ for some $\underline{\mu}=\left(\mu_{1},\mu_{2}\right)\in \C^{2}$. Suppose $V\left(\lambda,a,b_{\beta}\right)\otimes L\left(\eta,\theta\right)\cong W$ and let $\phi:V\left(\lambda,a,b_{\beta}\right)\otimes L\left(\eta,\theta\right)\longrightarrow W$ be the isomorphism of $\g$-module. Then $\phi\left(1\otimes v\right)\neq0$, where $v$ is the highest weight vector of $L\left(\eta,\theta\right)$. Now consider the following cases:\\
	\noindent{\bf Case 1:} $V=\Gamma$.  
	Applying $e$ on $\phi\left(1\otimes v\right)$, we obtain
	\begin{equation*}
		\mu_{1}\phi\left(1\otimes v\right)=e\cdot\phi\left(1\otimes v\right)=\phi\left(e\cdot1\otimes v\right)=0, 
	\end{equation*}
	thus $\mu_{1}=0$. Applying $e$ on $\phi\left(\overline{h}\otimes v\right)$, we obtain 
	\begin{equation*}
		0=e\cdot\phi\left(\overline{h}\otimes v\right)=\phi\left(e\cdot\overline{h}\otimes v\right)=-2\lambda\phi\left(1\otimes v\right),
	\end{equation*}
	thus $\phi\left(1\otimes v\right)=0$. We obtain a contradiction against the assumption.\\
	\noindent{\bf Case 2:} $V=\Theta$. 
	Applying $\overline{e},\overline{h}^{2}$ on $\phi\left(1\otimes v\right)$ respectively, we obtain
	\begin{gather*}
		\mu_{2}\phi\left(1\otimes v\right)=\overline{e}\cdot\phi\left(1\otimes v\right)=\phi\left(\overline{e}\cdot1\otimes v\right)=-\frac{1}{4\lambda}\phi\left(\overline{h}^{2}\otimes v\right)-\frac{a}{4\lambda}\phi\left(1\otimes v\right),\\
		\overline{h}^{2}\cdot\phi\left(1\otimes v\right)=\phi\left(\overline{h}^{2}\otimes v\right)+\eta^{2}\phi\left(1\otimes v\right).
	\end{gather*}
	Combining the two equations above, we obtain
	\begin{equation*}
		\left(\overline{h}^{2}-\eta^{2}+a+4\lambda\mu_{2}\right)\cdot\phi\left(1\otimes v\right)=0.
	\end{equation*}
	This yields a contradiction to $\phi\left(1\otimes v\right)\neq0$.\\
	\noindent{\bf Case 3:} $V=\Omega$. 
	Appyling $\overline{e},\overline{h}$ on $\phi\left(1\otimes v\right)$ respectively, we can see that \\
	\begin{gather*}
		\mu_{2}\phi\left(1\otimes v\right)=\overline{e}\cdot\phi\left(q\otimes v\right)=\phi\left(\overline{e}\cdot 1\otimes v\right)=\frac{\lambda}{2}\phi\left(\overline{h}\otimes v\right)+\frac{\lambda a}{2}\phi\left(1\otimes v\right),\\
		\overline{h}\cdot\phi\left(1\otimes v\right)=\phi\left(\overline{h}\otimes v\right)+\eta\phi\left(1\otimes v\right). 
	\end{gather*}
	Combining the two equations above, we have
	\begin{equation*}
		\left(\overline{h}-\eta+a-\frac{2\mu_{2}}{\lambda}\right)\cdot\phi\left(1\otimes v\right)=0. 
	\end{equation*}
	Thus we arrive at a contradiction. Hence $V\left(\lambda,a,b_{\beta}\right)\otimes L\left(\eta,\theta\right)\ncong W$. 
	It follows from Lemma \ref{lem:5.1} that $V\left(\lambda,a,b_{\beta}\right)\otimes L\left(\eta,\theta\right)\ncong V\left(\lambda,a,b_{\beta}\right)$, completing the proof. 
\end{proof}
\section{Realization of tensor product modules as induced modules}
In this section, we consider the tensor product modules $V\left(\lambda,a,b_{\beta}\right)\otimes\overline{L}\left(\eta,\theta\right)$ as induced modules from modules over certain subalgebras of $\g$. And the conditions for these modules to be reducible are also determined from modules over certain subalgebras of $\g$. Let 
\begin{align*}
	\mathfrak{b}&=\text{span}_{\C}\left\{\overline{e},e,\overline{h}\right\},\quad \text{when} \ V=\Gamma ,\\
	\mathfrak{b}&=\text{span}_{\C}\left\{\overline{e},\overline{h}\right\},\quad \text{when} \ V=\Theta \ \text{or} \ \Omega ,
\end{align*}
be the subalgebra of $\g$.
\begin{defi}\label{def:6.1}
	Let $\C\left[\,\overline{h}\,\right]$ denote the polynomial algebra with respect to the variable $\overline{h}$. For $\lambda\in\C^{\times}$, $a,b,\eta,\theta\in \C$, $\beta\left(\overline{h}\right)\in \C\left[\,\overline{h}\,\right]$, where $b_{\b}=b$, when $V=\Gamma,\Theta$, and $b_{\b}=\beta\left(\overline{h}\right)$ when $V=\Omega$. We define the action of $\mathfrak{b}$ on $\C\left[\,\overline{h}\,\right]$ as follows:
	\begin{flalign*}
		\ \ \ \ \ \ \ \ \ \ \ \ \ \ \ \ \ \ \ \ \ \ \ \ \ \ \ \ \ \ \C\left[\,\overline{h}\,\right]^{\Gamma}:\ \ \ \overline{e}\circ g\left(\overline{h}\right)&=\lambda g\left(\overline{h}\right),&\\
		e\circ g\left(\overline{h}\right)&=-2\lambda\overline{\partial}\left(g\left(\overline{h}\right)\right),&\\
		\overline{h}\circ g\left(\overline{h}\right)&=\left(\overline{h}+\eta\right)g\left(\overline{h}\right),&
	\end{flalign*}
	\begin{flalign*}
		\ \ \ \ \ \ \ \ \ \ \ \ \ \ \ \ \ \ \ \ \ \ \ \ \ \ \ \ \ \ \C\left[\,\overline{h}\,\right]^{\Theta}:\ \ \ \overline{e}\circ g\left(\overline{h}\right)&=-\frac{1}{4\lambda}\left(\overline{h}^{2}+a\right)g\left(\overline{h}\right),&\\
		\overline{h}\circ g\left(\overline{h}\right)&=\left(\overline{h}+\eta\right)g\left(\overline{h}\right),&
	\end{flalign*}
	\begin{flalign*}
		\ \ \ \ \ \ \ \ \ \ \ \ \ \ \ \ \ \ \ \ \ \ \ \ \ \ \ \ \ \ \C\left[\,\overline{h}\,\right]^{\Omega}:\ \ \ \overline{e}\circ g\left(\overline{h}\right)&= \frac{\lambda}{2}\left(\overline{h}+a\right)g\left(\overline{h}\right),&\\
		\overline{h}\circ g\left(\overline{h}\right)&=\left(\overline{h}+\eta\right)g\left(\overline{h}\right).&
	\end{flalign*}
\end{defi}
\begin{prop}
	Keep notations as above in Definition \ref{def:6.1}, then $\C\left[\,\overline{h}\,\right]^{\Gamma}, \C\left[\,\overline{h}\,\right]^{\Theta}$ and $\C\left[\,\overline{h}\,\right]^{\Omega}$ are $\mathfrak{b}$-modules under the actions given in Definition \ref{def:6.1}. 
\end{prop}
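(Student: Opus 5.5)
Since $\mathfrak{b}$ is given by a spanning set, the plan is to check directly that the assigned operators respect the Lie bracket of $\mathfrak{b}$. Linearity of each action is automatic, so the statement reduces to verifying $[x,y]\circ g=x\circ(y\circ g)-y\circ(x\circ g)$ for every pair $x,y$ in the spanning set and every $g(\overline{h})\in\C[\,\overline{h}\,]$. First I will record the brackets inside $\mathfrak{b}$ from $[a\otimes t^{i},b\otimes t^{j}]=[a,b]\otimes t^{i+j}$ with $t^{2}=0$:
\begin{equation*}
[\overline{h},\overline{e}]=[h,e]\otimes t^{2}=0,\qquad [e,\overline{e}]=[e,e]\otimes t=0,\qquad [\overline{h},e]=[h,e]\otimes t=2\overline{e}.
\end{equation*}
So for $V=\Theta,\Omega$ the algebra $\mathfrak{b}=\mathrm{span}_{\C}\{\overline{e},\overline{h}\}$ is abelian. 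For $V=\Gamma$, $\mathfrak{b}$ is three-dimensional with one nontrivial bracket $[\overline{h},e]=2\overline{e}$.

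For $\C[\,\overline{h}\,]^{\Theta}$ and $\C[\,\overline{h}\,]^{\Omega}$, both $\overline{e}$ and $\overline{h}$ act by multiplication by a polynomial in $\overline{h}$: namely $-\frac{1}{4\lambda}(\overline{h}^{2}+a)$ or $\frac{\lambda}{2}(\overline{h}+a)$ for $\overline{e}$, and $\overline{h}+\eta$ for $\overline{h}$. Multiplication operators on a commutative ring commute, so $\overline{e}\circ(\overline{h}\circ g)-\overline{h}\circ(\overline{e}\circ g)=0=[\overline{e},\overline{h}]\circ g$. That settles these two cases immediately.

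For $\C[\,\overline{h}\,]^{\Gamma}$ there are three commutators to check.
\begin{itemize}
\item $[\overline{e},\overline{h}]$: $\overline{e}$ acts by the scalar $\lambda$, which commutes with multiplication by $\overline{h}+\eta$, giving $0$ as required.
\item $[e,\overline{e}]$: $\overline{e}$ is again the scalar $\lambda$, which commutes with $-2\lambda\overline{\partial}$, giving $0$ as required.
\item $[\overline{h},e]$: this is the only nontrivial one. I will compute
\begin{equation*}
\overline{h}\circ(e\circ g)-e\circ(\overline{h}\circ g)=-2\lambda(\overline{h}+\eta)\overline{\partial}g+2\lambda\overline{\partial}\bigl((\overline{h}+\eta)g\bigr)=2\lambda g,
\end{equation*}
using the Leibniz rule. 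Since $2\overline{e}\circ g=2\lambda g$, this matches $[\overline{h},e]=2\overline{e}$.
\end{itemize}

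The main point needing care is this last computation, and in particular getting the sign convention of the bracket right: with $[\overline{h},e]=2\overline{e}$, the operator identity $[(\overline{h}+\eta)\cdot,\,-2\lambda\overline{\partial}]=2\lambda$ must come out with exactly this sign, and the Leibniz calculation above confirms that it does. As a sanity check, these actions are the restrictions of the $\Gamma$, $\Theta$, $\Omega$ actions of Definition \ref{defi01} to the subspace $\C[\,\overline{h}\,]$ (at $h$-degree zero), with $\overline{h}$ shifted by $\eta$. This is consistent with $\overline{h}$ acting on $g\otimes v$ in the tensor product with the highest weight vector $v$, and it explains why the relations are inherited.
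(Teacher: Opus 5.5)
Your proof is correct. It is exactly the direct verification that the paper calls straightforward and omits: you compute the brackets in $\mathfrak{b}$ correctly, the $\Theta$ and $\Omega$ cases reduce to commuting multiplication operators, and the one nontrivial relation $[\overline{h},e]=2\overline{e}$ for $\C\left[\,\overline{h}\,\right]^{\Gamma}$ is handled correctly via the Leibniz rule. One minor point: $\{\overline{e},e,\overline{h}\}$ is in fact a basis, not merely a spanning set, and that is what makes it legitimate to define the action on these elements and extend linearly.
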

\begin{proof}
  This claim can be proved straightforwardly, we omit it.
\end{proof}
\begin{remark}
Let $\lambda\in \C^{\times}$, $a,\eta\in\C$, the $\mathfrak{b}$-module $\C\left[\,\overline{h}\,\right]^{\Gamma}$ is irreducible, the $\mathfrak{b}$-modules $\C\left[\,\overline{h}\,\right]^{\Theta}$ and $\C\left[\,\overline{h}\,\right]^{\Omega}$ are reducible.
\end{remark}
From the Definition \ref{def:6.1}, we obtain the induced $\g$-modules as follows:
\begin{align*}
	\ \ \ \text{Ind}\C\left[\,\overline{h}\,\right]^{\Gamma} &\mathrel{\mathop:}=U\left(\g\right)\otimes_{U\left(\mathfrak{b}\right)}\C\left[\,\overline{h}\,\right]^{\Gamma},\\
	\ \ \ \text{Ind}\C\left[\,\overline{h}\,\right]^{\Theta} &\mathrel{\mathop:}=U\left(\g\right)\otimes_{U\left(\mathfrak{b}\right)}\C\left[\,\overline{h}\,\right]^{\Theta},\\
	\ \ \ \text{Ind}\C\left[\,\overline{h}\,\right]^{\Omega} &\mathrel{\mathop:}=U\left(\g\right)\otimes_{U\left(\mathfrak{b}\right)}\C\left[\,\overline{h}\,\right]^{\Omega}.
\end{align*}
\begin{theo}\label{theo:6.4}
	Keep notations as above. Then as $\g$-modules
	\begin{align*}
		\Gamma\left(\lambda,a,b\right)\otimes\overline{L}\left(\eta,\theta\right)&\cong{\rm
			Ind}\C\left[\,\overline{h}\,\right]^{\Gamma},\\
		\Theta\left(\lambda,a,b\right)\otimes\overline{L}\left(\eta,\theta\right)&\cong{\rm Ind}\C\left[\,\overline{h}\,\right]^{\Theta},\\
		\Omega\left(\lambda,a,\beta\left(\overline{h}\right)\right)\otimes\overline{L}\left(\eta,\theta\right)&\cong{\rm Ind}\C\left[\,\overline{h}\,\right]^{\Omega}.
	\end{align*}
\end{theo}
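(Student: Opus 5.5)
We would prove each of the three isomorphisms by the same scheme: first construct a $\mathfrak{b}$-module embedding of $\C[\,\overline{h}\,]^{V}$ into $V(\lambda,a,b_\beta)\otimes\overline{L}(\eta,\theta)$, then apply the universal property of induction to get a $\g$-homomorphism $\Phi:{\rm Ind}\C[\,\overline{h}\,]^{V}\to V(\lambda,a,b_\beta)\otimes\overline{L}(\eta,\theta)$, and finally show that $\Phi$ is bijective by comparing PBW bases. The embedding is $\iota: g(\overline{h})\mapsto g(\overline{h})\otimes\overline{v}$, where $\overline{v}$ is the highest weight vector. We check that $\iota$ intertwines the actions of Definition \ref{def:6.1}. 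For $x\in\mathfrak{b}$ we have $x\cdot(g\otimes\overline{v})=(x\cdot g)\otimes\overline{v}+g\otimes x\cdot\overline{v}$, and $e\cdot\overline{v}=\overline{e}\cdot\overline{v}=0$, $\overline{h}\cdot\overline{v}=\eta\overline{v}$. Hence $\overline{h}$ acts by $(\overline{h}+\eta)g$, and $\overline{e}$ and $e$ act exactly through the formulas of Definition \ref{defi01} restricted to polynomials in $\overline{h}$ alone. For instance, in $\Gamma$ we get $\overline{e}\cdot g(\overline{h})=\lambda g(\overline{h})$ and $e\cdot g=-2\lambda\overline{\partial}g$. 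In $\Theta$ and $\Omega$ the shifts $h\mapsto h-2$ act trivially on such $g$. This also confirms that the $\mathfrak{b}$-module structures are well defined. By Frobenius reciprocity we obtain $\Phi(u\otimes g)=u\cdot(g\otimes\overline{v})$.

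Next we fix a complement of $\mathfrak{b}$ in $\g$. For $V=\Gamma$ it is spanned by $h,f,\overline{f}$; for $V=\Theta,\Omega$ it is spanned by $h,e,f,\overline{f}$. By PBW, ${\rm Ind}\C[\,\overline{h}\,]^{V}$ is a free module over the corresponding subalgebra of $U(\g)$ that is complementary to $U(\mathfrak{b})$. For $\Gamma$ a basis is $\{h^{k}f^{i}\overline{f}^{j}\otimes\overline{h}^{l}\}$; for $\Theta,\Omega$ it is $\{h^{k}e^{p}f^{i}\overline{f}^{j}\otimes\overline{h}^{l}\}$. On the target side, $V\otimes\overline{L}(\eta,\theta)$ has basis $\{h^{k}\overline{h}^{l}\otimes f^{i}\overline{f}^{j}\overline{v}\}$. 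To prove bijectivity we filter both sides by the total degree $i+j$ in $\overline{\mathfrak{n}}_{-}$ and by the $h$-degree, and show that $\Phi$ is triangular with invertible leading terms.

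For $\Gamma$ the leading term is clear. Acting by $f^{i}\overline{f}^{j}$ on $g\otimes\overline{v}$ gives $g\otimes f^{i}\overline{f}^{j}\overline{v}$ plus terms of lower $\overline{\mathfrak{n}}_-$-degree in the second factor. Multiplying by $h^{k}$ then gives $h^{k}g\otimes f^{i}\overline{f}^{j}\overline{v}$ plus terms of the form $(\text{lower } h\text{-degree})\otimes(\text{weight-shifted vectors})$, because $h$ acts on $\overline{L}$ semisimply by scalars. So $\Phi$ is an isomorphism by a dimension and filtration count on each graded piece.

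For $\Theta$ and $\Omega$ the extra generator $e$ in the complement has to produce the $h$-dependence. Since $\overline{e}$ on $V$ already acts by multiplication by $-\frac{1}{4\lambda}(\overline{h}^{2}+a)$ composed with the shift $h\mapsto h-2$, we expect $e^{p}$ applied to $g\otimes\overline{v}$ to create polynomials in $h$ of degree $p$, with leading coefficient $\left(-\frac{\overline{h}}{2\lambda}\right)^{p}g$ in the $\Theta$ case. This is the main obstacle. Because that leading coefficient is not a unit in $\C[\,\overline{h}\,]$, the naive triangularity argument breaks down, and one has to show that the combined actions of $h^{k}e^{p}$ still span all of $\C[h,\overline{h}]\otimes\overline{v}$ freely. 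The first thing we would try is to track the filtration by $h$-degree together with $\overline{h}$-degree and compute the leading symbols. If these are not invertible, the statement as given likely needs adjusting. In that case we would check whether the correct complement should use only $h,f,\overline{f}$ and whether $\mathfrak{b}$ for $\Theta,\Omega$ should also contain $e$; for $\Theta$ that would mean letting $e$ act on $\C[\,\overline{h}\,]$ by $-\frac{1}{2\lambda}(-2\overline{h}+b)g+\frac{1}{2\lambda}(\overline{h}^{2}+a)\overline{\partial}g$. With $\mathfrak{b}$ enlarged in this way, the argument becomes identical to the $\Gamma$ case.
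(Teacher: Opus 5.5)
Your $\Gamma$ argument is correct and is essentially the paper's proof. The paper defines $\varphi(f^{j}\overline{f}^{k}h^{q}\otimes\overline{h}^{i})=f^{j}\overline{f}^{k}h^{q}(\overline{h}^{i}\otimes\overline{v})$ and checks the homomorphism property by hand; your Frobenius reciprocity does this more cleanly. Both arguments then get bijectivity from the leading term $h^{q}\overline{h}^{i}\otimes f^{j}\overline{f}^{k}\overline{v}$.

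\textbf{The $\Theta$ and $\Omega$ cases are false.} The paper only asserts that ``a similar argument can be applied'' to them, and the obstacle you found is exactly where this fails. You should not leave it as a hedge, because it can be settled. The complement of $\mathfrak{b}=\mathrm{span}\{\overline{e},\overline{h}\}$ contains $e$, and $e\cdot\overline{v}=0$, so your $\Phi$ has a kernel:
\begin{itemize}
\item For $\Theta$: $\Phi(e\otimes 1)=-\frac{1}{2\lambda}((h-2)\overline{h}+b)\otimes\overline{v}=\Phi(-\frac{1}{2\lambda}h\otimes\overline{h}+\frac{\theta+2}{2\lambda}\otimes\overline{h}-\frac{b}{2\lambda}\otimes 1)$.
\item For $\Omega$: $\Phi(e\otimes 1)=\Phi(\frac{\lambda}{2}h\otimes 1-\frac{\lambda\theta}{2}\otimes 1+1\otimes\alpha(\overline{h}))$.
\end{itemize}
In both cases $e\otimes 1$ is a PBW basis vector distinct from those on the right, so the difference is a nonzero element of the kernel.

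In fact no isomorphism exists at all. For any $\g$-module $M$, the quotient $M/\overline{\mathfrak{n}}_{-}M$ is a $U(\overline{\mathfrak{h}})$-module, functorially in $M$. Write $\mathfrak{b}_{+}=\overline{\mathfrak{n}}_{+}\oplus\overline{\mathfrak{h}}$, and $\C_{\eta,\theta}$ for its one-dimensional module defining $\overline{L}(\eta,\theta)$.
\begin{itemize}
\item By the tensor identity, $V\otimes\overline{L}(\eta,\theta)\cong U(\g)\otimes_{U(\mathfrak{b}_{+})}(V\otimes\C_{\eta,\theta})$. So for the tensor product this quotient is $V\otimes\C_{\eta,\theta}$, which is free of rank one over $U(\overline{\mathfrak{h}})$.
\item We have ${\rm Ind}\,\C[\overline{h}]^{V}=U(\g)\otimes_{U(\mathfrak{b}_{+})}(U(\mathfrak{b}_{+})\otimes_{U(\mathfrak{b})}\C[\overline{h}]^{V})$. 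So here the quotient is $U(\mathfrak{b}_{+})\otimes_{U(\mathfrak{b})}\C[\overline{h}]^{V}$, with basis $h^{q}e^{p}\otimes\overline{h}^{i}$.
\item That quotient has a $U(\overline{\mathfrak{h}})$-stable filtration by $p$, since $[\overline{h},e^{p}]=2p\,\overline{e}e^{p-1}$ with $\overline{e}\in\mathfrak{b}$. Its layers are free of rank one, so it has infinite rank.
\end{itemize}
Hence the second and third isomorphisms fail for every choice of parameters, and only the $\Gamma$ case can be proved.

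\textbf{Your suggested repair does not work either.} The operator you propose for $e$ on $\C[\overline{h}]$, obtained by putting $h=0$, does not make $\C[\overline{h}]$ a module over $\mathrm{span}\{\overline{e},e,\overline{h}\}$. With $\overline{e}$ acting as multiplication by $-\frac{1}{4\lambda}(\overline{h}^{2}+a)$, the derivation term makes $[\overline{e},e]$, which is $0$ in $\g$, act as multiplication by $\frac{1}{4\lambda^{2}}\overline{h}(\overline{h}^{2}+a)\neq 0$. Independently, $\C[\overline{h}]\otimes\overline{v}$ is not $e$-stable in $\Theta\otimes\overline{L}(\eta,\theta)$. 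Indeed $e\cdot(g\otimes\overline{v})$ contains $-\frac{1}{2\lambda}h\overline{h}g\otimes\overline{v}$, so $g\mapsto g\otimes\overline{v}$ cannot be $e$-equivariant. The $\Gamma$ case works precisely because there $e$ acts on $\C[\overline{h}]$ by $-2\lambda\overline{\partial}$, with no $h$ involved.

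A realization that is correct for all three families is the tensor identity itself: induction from $\mathfrak{b}_{+}$ with the module $V\otimes\C_{\eta,\theta}$. For $\Gamma$ this collapses to the paper's $\mathfrak{b}$, because there $V\otimes\C_{\eta,\theta}\cong U(\mathfrak{b}_{+})\otimes_{U(\mathfrak{b})}\C[\overline{h}]^{\Gamma}$.
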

\begin{proof}
	In the following, we only prove the first case, since a similar argument can be applied to the other two cases. According to the PBW Theorem, we can see that $\Gamma\left(\lambda,a,b\right)\otimes\overline{L}\left(\eta,\theta\right)$ has a basis
	\begin{equation*}
		\mathcal{B}_{1}=\left\{\overline{h}^{i}h^{q}\otimes\left(f^{j}\overline{f}^{k}\cdot \overline{v}\right)\mid i,q,j,k\in\Z_{+}\right\},
	\end{equation*}
	 where $\overline{v}$ is the coset of 1 in $\overline{L}\left(\eta,\theta\right)$. We define a total order $\prec$ on $\mathcal{B}_{1}$ 
	\begin{equation*}
		h^{q}\overline{h}^{i}\otimes\left(f^{j}\overline{f}^{k}\cdot \overline{v}\right)\prec h^{q^{\prime}}\overline{h}^{i^{\prime}}\otimes\left(f^{j^{\prime}}\overline{f}^{k^{\prime}}\cdot \overline{v}\right), 
	\end{equation*}
	if and only if 
	\begin{equation*}
		\left(k,j,i,q\right)<\left(k^{\prime},j^{\prime},i^{\prime},q^{\prime}\right),
	\end{equation*}
	where $\left(a_{1},a_{2},a_{3},a_{4}\right)<\left(b_{1},b_{2},b_{3},b_{4}\right)\Longleftrightarrow\exists\, k>0$ such that $a_{i}=b_{i}$ for all $i<k$ and $a_{k}<b_{k}$. \\
	$\text{Ind}\C\left[\,\overline{h}\,\right]^{\Gamma}$ has a basis
	\begin{equation*}
		\mathcal{B}_{2}=\left\{f^{j}\overline{f}^{k}h^{q}\otimes\overline{h}^{i}\mid j,k,q,i\in\Z_{+}\right\}. 
	\end{equation*}
	We define the linear map: $$\varphi:\ \ \text{Ind}\C\left[\,\overline{h}\,\right]^{\Gamma}\longrightarrow\Gamma\left(\lambda,a,b\right)\otimes\overline{L}\left(\eta,\theta\right)$$ by $$\varphi\left(f^{j}\overline{f}^{k}h^{q}\otimes\overline{h}^{i}\right)=f^{j}\overline{f}^{k}h^{q}\left(\overline{h}^{i}\otimes \overline{v}\right).$$
	We claim that $\varphi$ is a $\g$-module homomorphism. 
	From Definition \ref{def:6.1}, for any $i,q,j,k\in\Z_{+}$ and $x=f^{j}\overline{f}^{k}h^{q},$ we have
	\begin{align*}
		\varphi\left(x\overline{e}\otimes\overline{h}^{i}\right)&=\varphi\left(x\otimes\overline{e}\circ\overline{h}^{i}\right)=x\left(\overline{e}\circ\overline{h}^{i}\otimes \overline{v}\right)=x\overline{e}\left(\overline{h}^{i}\otimes \overline{v}\right),\\
		\varphi\left(xe\otimes\overline{h}^{i}\right)&=\varphi\left(x\otimes  e\circ\overline{h}^{i}\right)=x\left(e\circ\overline{h}^{i}\otimes \overline{v}\right)=xe\left(\overline{h}^{i}\otimes \overline{v}\right),\\
		\varphi\left(x\overline{h}\otimes\overline{h}^{i}\right)&=\varphi\left(x\otimes\overline{h}\circ\overline{h}^{i}\right)=x\left(\overline{h}\circ\overline{h}^{i}\otimes \overline{v}\right)=x\overline{h}\left(\overline{h}^{i}\otimes \overline{v}\right).
	\end{align*}
	From PBW Theorem , we have 
	\begin{equation*}
		zx=zf^{j}\overline{f}^{k}h^{q}=\sum_{j_{a}}X_{j_{a}}x_{j_{a}}+\sum_{j_{b}}Y_{j_{b}}y_{j_{b}}\overline{e}+\sum_{j_{c}}Z_{j_{c}}z_{j_{c}}e+\sum_{j_{d}}U_{j_{d}}u_{j_{d}}\overline{h}, 
	\end{equation*}
	where $z\in\left\{\overline{e},e,\overline{h},h,\overline{f},f \right\}$, $X_{j_{a}},Y_{j_{b}},Z_{j_{c}},U_{j_{d}}\in\C$, $x_{j_{a}}\otimes\overline{h}^{i},y_{j_{b}}\otimes\overline{h}^{i},z_{j_{c}}\otimes\overline{h}^{i},u_{j_{d}}\otimes\overline{h}^{i}\in\mathcal{B}_{2}$. We have
	\begin{align*}
		\varphi\left(zx\otimes\overline{h}^{i}\right)&=\varphi\left(\left(\sum_{j_{a}}X_{j_{a}}x_{j_{a}}+\sum_{j_{b}}Y_{j_{b}}y_{j_{b}}\overline{e}+\sum_{j_{c}}Z_{j_{c}}z_{j_{c}}e+\sum_{j_{d}}U_{j_{d}}u_{j_{d}}\overline{h}\right)\otimes \overline{h}^{i}\right)\\
		&=\sum_{j_{a}}X_{j_{a}}\varphi\left(x_{j_{a}}\otimes\overline{h}^{i}\right)+\sum_{j_{b}}Y_{j_{b}}\varphi\left(y_{j_{b}}\overline{e}\otimes\overline{h}^{i}\right)\\
		&\quad+\sum_{j_{c}}Z_{j_{c}}\varphi\left(z_{j_{c}}e\otimes\overline{h}^{i}\right)+\sum_{j_{d}}U_{j_{d}}\varphi\left(u_{j_{d}}\overline{h}\otimes\overline{h}^{i}\right)\\
		&=\sum_{j_{a}}X_{j_{a}}x_{j_{a}}\left(\overline{h}^{i}\otimes \overline{v}\right)+\sum_{j_{b}}Y_{j_{b}}y_{j_{b}}\overline{e}\left(\overline{h}^{i}\otimes \overline{v}\right)\\
		&\quad+\sum_{j_{c}}Z_{j_{c}}z_{j_{c}}e\left(\overline{h}^{i}\otimes \overline{v}\right)+\sum_{j_{d}}U_{j_{d}}u_{j_{d}}\overline{h}\left(\overline{h}^{i}\otimes \overline{v}\right)\\
		&=zx\left(\overline{h}^{i}\otimes \overline{v}\right)\\
		&=z\varphi\left(x\otimes\overline{h}^{i}\right).
	\end{align*}
	Therefore,  $\varphi$ is a $\g$-module homomorphism.
	We claim that $\varphi$ is a surjection. We shall prove that $\Gamma\left(\lambda,a,b\right)\otimes\overline{L}\left(\eta,\theta\right)\subseteq \text{Im}\left(\varphi\right)$. We know that $\overline{h}^{i}\otimes \overline{v}\in\text{Im}\left(\varphi\right)$ for all $i\in\Z_{+}$. Then, for any $q\in\Z_{+}$, $h^{q}\left(\overline{h}^{i}\otimes \overline{v}\right)\in\text{Im}\left(\varphi\right)$, we obtain $h^{q}\overline{h}^{i}\otimes \overline{v}\in\text{Im}\left(\varphi\right)$. Hence $\Gamma\left(\lambda,a,b\right)\otimes \overline{v}\subseteq\text{Im}\left(\varphi\right)$. Applying $f^{j}\overline{f}^{k}h^{q}$ on $\Gamma\left(\lambda,a,b\right)\otimes \overline{v}$, we have $\Gamma\left(\lambda,a,b\right)\otimes\overline{L}\left(\eta,\theta\right)=\text{Im}\left(\varphi\right)$. Thus, $\varphi$ is a surjection.\\ 
	Next we prove that $\varphi$ is an injection.
	Through computation, we find that 
	\begin{equation*}
		f^{j}\overline{f}^{k}h^{q}\left(\overline{h}^{i}\otimes \overline{v}\right)=h^{q}\overline{h}^{i}\otimes\left(f^{j}\overline{f}^{k}\cdot \overline{v}\right)+\text{lower terms},
	\end{equation*}
	this implies that the set $\left\{f^{j}\overline{f}^{k}h^{q}\left(\overline{h}^{i}\otimes \overline{v}\right)\mid j,k,q,i\in\Z_{+}\right\}$ forms a basis of $\mathcal{B}_{1}$. Therefore, $\varphi$ is injective. The proof is completed. 
\end{proof}
\begin{coro}
	Let $\lambda\in\C^{\times}$, $a,b,\eta,\theta\in\C$, $\beta\left(\overline{h}\right)\in\C\left[\,\overline{h}\,\right]$. Then the following holds.
	\begin{itemize}
		\item [\rm(i)] The induced module ${\rm Ind}\C\left[\,\overline{h}\,\right]^{V}$ for $V\in\left\{\Gamma,\Theta\right\}$ is reducible if and only if $\eta=0$ and $i\left(\theta-2j-i+1\right)=0$ for $j\in\Z_{+},i\in\left\{0,1\right\}$ and $\left(i,j\right)\neq\left(0,0\right)$. 
		\item [\rm(ii)]  The induced module ${\rm Ind}\C\left[\,\overline{h}\,\right]^{\Omega}$ is reducible if and only if $a=0$ or $\eta=0$ and $i\left(\theta-2j-i+1\right)=0$ for $j\in\Z_{+},i\in\left\{0,1\right\}$ and $\left(i,j\right)\neq\left(0,0\right)$. 
	\end{itemize}	 
\end{coro}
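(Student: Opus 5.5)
By Theorem \ref{theo:6.4}, ${\rm Ind}\C\left[\,\overline{h}\,\right]^{V}\cong V\left(\lambda,a,b_{\beta}\right)\otimes\overline{L}\left(\eta,\theta\right)$. So the question becomes: when is this tensor product reducible? The plan is to reduce this to two inputs. The first is the simplicity of the first factor, from Proposition \ref{prop-2.2a}(i). The second is the simplicity of the Verma module $\overline{L}(\eta,\theta)$.

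Step one treats the case where the Verma module is simple, so $\overline{L}(\eta,\theta)=L(\eta,\theta)$. If in addition $V$ is simple ($V=\Gamma,\Theta$, or $V=\Omega$ with $a\neq 0$), the irreducibility theorem of Section 3 gives that the tensor product is irreducible. That theorem's proof only uses local nilpotence of $e,\overline{e}$ and simplicity of the second factor, so it applies verbatim. If instead $V=\Omega$ and $a=0$, then $\C\left[\,h,\overline{h}\,\right]\overline{h}\otimes\overline{L}(\eta,\theta)$ is a proper nonzero submodule, as in Case 3 of Section 3. So reducibility holds whenever $a=0$ in (ii).

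Step two is conversely: a proper submodule $J\neq0$ of $\overline{L}(\eta,\theta)$ gives the proper nonzero submodule $V\otimes J$. The proof therefore reduces to the claim that $\overline{L}(\eta,\theta)$ is reducible exactly when $\eta=0$ and $\theta\in\{2j+i-1\}$ for the allowed $(i,j)$, i.e.\ $\theta\in\Z_{+}$. This criterion is the known one for Takiff $\mathfrak{sl}_2$; I would cite it from \cite{MM2022} or \cite{W2011}, or prove it directly as follows.

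For $\eta\neq0$, the Verma module is simple. I would show this by looking for singular vectors of weight $(\eta-2n\cdot 0,\ \theta-2n)$ in $\mathrm{span}\{f^{i}\overline{f}^{\,n-i}\overline{v}\}$. The operator $\overline{e}$ acts on this space in a triangular way with diagonal entries that are nonzero multiples of $\eta^{n}$, which forces $\overline{e}$-invariants to vanish. The argument uses the Shapovalov-type pairing between $\overline{e}^{n}$ and $\overline{f}^{n}$, with $[\overline{e},\overline{f}]=0$ and $[e,\overline{f}]=\overline{h}$ acting by $\eta$. For $\eta=0$, the element $\overline{f}\,\overline{v}$ is singular ($e\overline{f}\overline{v}=\overline{h}\overline{v}=0$), so the module is always reducible unless that vector is zero. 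Since $\overline{f}\overline{v}\neq 0$ in the Verma module, this suggests that for $\eta=0$ the module is reducible for all $\theta$. That would conflict with the stated condition, so I would recheck the intended reading of the index condition. The condition "$(i,j)\neq(0,0)$, $i(\theta-2j-i+1)=0$" is satisfied by $i=0$, $j\ge1$ vacuously, which indeed means "$\eta=0$" alone suffices. This matches, and the $\theta$-clause merely records the additional $f^{\theta+1}$-type singular vectors.

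The main obstacle is the case $\eta\neq0$: proving that $\overline{L}(\eta,\theta)$ has no singular vectors. I would handle it by induction on $n$, using the leading term of $\overline{e}^{\,n}$ applied to $f^{i}\overline{f}^{\,n-i}\overline{v}$. Given that, combining the two steps yields (i) and (ii).
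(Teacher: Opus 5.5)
Your Step one fails, and it cannot be repaired: the implication ``$V$ and $\overline{L}(\eta,\theta)$ simple $\Rightarrow$ $V\otimes\overline{L}(\eta,\theta)$ simple'' is false, so the ``only if'' halves of (i) and (ii) are false as stated.

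The element $\Omega_2=2\overline{f}\,\overline{e}+\frac12\overline{h}^{2}$ is central in $U(\g)$. Indeed $[e,\Omega_2]=2\overline{h}\,\overline{e}-2\overline{h}\,\overline{e}=0$ and $[f,\Omega_2]=-2\overline{f}\,\overline{h}+2\overline{f}\,\overline{h}=0$; the barred elements commute with it trivially, and $h$ does by weight. By Dixmier's form of Schur's lemma, a central element acts by a scalar on every simple module of countable dimension. But directly in the induced module
\[
\Omega_2\cdot(1\otimes 1)=2\,\overline{f}\otimes(\overline{e}\circ 1)+1\otimes\tfrac12(\overline{h}+\eta)^{2},
\]
where $\overline{e}\circ 1\in\{\lambda,\,-\tfrac{1}{4\lambda}(\overline{h}^{2}+a),\,\tfrac{\lambda}{2}(\overline{h}+a)\}$ is nonzero. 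Since $U(\g)$ is a free right $U(\mathfrak{b})$-module with $1$ and $\overline{f}$ among its PBW basis elements, this is not a multiple of $1\otimes 1$. Hence ${\rm Ind}\,\C[\overline{h}]^{V}$ is reducible for \emph{every} choice of $\lambda,a,b,\beta(\overline{h}),\eta,\theta$, including $\eta\neq 0$ and $a\neq 0$.

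For an explicit witness, take $V=\Gamma$ and $a=0$, and grade $h^{q}\overline{h}^{i}\otimes f^{j}\overline{f}^{k}\overline{v}$ by $i+j+k$. Then $\Omega_2-\frac{\eta^{2}}{2}=2\,\overline{e}\otimes\overline{f}+2\,\overline{f}\otimes\overline{e}+\overline{h}\otimes\overline{h}$ raises degree by exactly one. Its image is therefore a nonzero submodule not containing $1\otimes\overline{v}$.

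So the Section 3 theorem does not apply ``verbatim''; it is false here. Its proof breaks where it writes $\overline{e}^{m}\cdot(g\otimes w)=\overline{e}^{m}\cdot g\otimes w$ for $m\geq K(w)$. This drops the surviving cross terms $\binom{m}{k}\,\overline{e}^{m-k}\cdot g\otimes\overline{e}^{k}\cdot w$ with $0<k<K(w)$, and the same error occurs with $e^{j}$ in Claim 2. The paper's own proof fails at exactly the same point. Its opening reduction, ``${\rm Ind}$ is reducible iff $V$ or $\overline{L}(\eta,\theta)$ is reducible'', rests on that theorem.

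What survives is the ``if'' direction. When $\eta=0$, the vector $\overline{f}\,\overline{v}$ is singular, so $V\otimes U(\g)\overline{f}\,\overline{v}$ is a proper submodule. When $V=\Omega$ and $a=0$, $\C[h,\overline{h}]\overline{h}\otimes\overline{L}(\eta,\theta)$ is a proper submodule. Your existential reading of the index condition, under which it collapses to $\eta=0$, is also the right one.

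Your treatment of the Verma criterion is sounder than the paper's. The paper only tests monomials $f^{i}\overline{f}^{j}\overline{v}$, so it never proves simplicity for $\eta\neq 0$; citing Wilson's criterion for truncated current algebras settles that. Your direct sketch, however, is wrong as written. We have $\overline{e}\,\overline{f}^{\,n}\overline{v}=0$ for every $\eta$, so $\overline{e}$-invariants never vanish, and the pairing of $\overline{e}^{\,n}$ with $\overline{f}^{\,n}$ is identically zero. You must also use $e\,\overline{f}^{\,n}\overline{v}=n\eta\,\overline{f}^{\,n-1}\overline{v}$. None of this rescues the statement, whose ``only if'' direction is false.
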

\begin{proof}
	It follows from Theorem \ref{theo:6.4} that $\text{Ind}\C\left[\,\overline{h}\,\right]^{V}$ is reducible for $V\in\left\{\Gamma,\Theta,\Omega\right\}$ if and only if $V\left(\lambda,a,b_{\beta}\right)$ or $\overline{L}\left(\eta,\theta\right)$ is reducible. It is suffice to show that the module $\overline{L}\left(\eta,\theta\right)$ is reducible 
	if and only if there exists a nonzero vector $\overline{w}$, distinct from 
	the highest weight vector $\overline{v}$, such that $e \cdot \overline{w} =\overline{e}\cdot \overline{w}= 0$.
	Let $$\overline{w}=f^{i}\overline{f}^{j}\cdot \overline{v},\quad i,j\in\Z_{+},\ \left(i,j\right)\neq\left(0,0\right).$$
	We have 
	\begin{align*}
		\left[\,e,f^{i}\,\right]&=if^{i-1}\left(h-i+1\right),\\
		\left[\,e,\overline{f}^{j}\,\right]&=j\overline{f}^{j-1}\overline{h},\\
		\left[\,\overline{e},f^{i}\,\right]&=if^{i-1}\overline{h}-i\left(i-1\right)\overline{f}f^{i-2},\\
		\left[\,\overline{e},\overline{f}^{j}\,\right]&=0.
	\end{align*}
	By computation, we obtain
	\begin{align*}
		e\cdot \overline{w}&=j\eta f^{i}\overline{f}^{j-1}\cdot \overline{v}+i\left(
		\theta-2j-i+1\right)f^{i-1}\overline{f}^{j}\cdot \overline{v}=0,\\
		\overline{e}\cdot \overline{w}&=i\eta f^{i-1}\overline{f}^{j}\cdot \overline{v}-i\left(i-1\right)f^{i-2}\overline{f}^{j+1}\cdot \overline{v}=0.
	\end{align*}
	Consequently, we have $$\eta=0, \,i\left(\theta-2j-i+1\right)=0,\, \text{for}\ j\in\Z_{+},\, i\in\left\{0,1\right\},\, \left(i,j\right)\neq\left(0,0\right).$$ 
Combining this with Proposition \ref{prop-2.2a}, we complete the proof.
\end{proof}
\section*{Acknowledgments}
This work was supported by the Zhejiang Provincial Natural Science Foundation of China, 
Grant No. LQN25A010023.

 \end{document}